\theoremstyle{plain}
\newtheorem{theorem}{Theorem}[section]
\newtheorem{proposition}[theorem]{Proposition}
\newtheorem{corollary}[theorem]{Corollary}
\newtheorem{lemma}[theorem]{Lemma}
\newtheorem{remark}[theorem]{Remark}
\newcommand{\dom}{\mathbf{d}}
\newcommand{\ran}{\mathbf{r}}
\begin{document}

\title[Graph inverse semigroups]{Graph inverse semigroups:\\ their characterization and completion}

\author{D.~G.~Jones}
\address{Department of Mathematics,
Heriot-Watt University,
Riccarton,
Edinburgh~EH14~4AS
Scotland}
\email{dj26@ma.hw.ac.uk}
\thanks{The first author was supported by an EPSRC Doctoral Training Account reference EP/P503647/1,
and the second author's research was partially supported by EPSRC grants EP/F004184, EP/F014945, EP/F005881 and EP/I033203/1.
Some of the material in this paper appeared in the first author's PhD thesis \cite{DJ}.} 

\author{M.~V.~Lawson}
\address{Department of Mathematics
and the
Maxwell Institute for Mathematical Sciences, 
Heriot-Watt University,
Riccarton,
Edinburgh~EH14~4AS
Scotland}
\email{markl@ma.hw.ac.uk}

\keywords{Inverse semigroups, free categories, Cuntz-Krieger algebras.}
\subjclass[2000]{Primary: 20M18; Secondary: 20M30, 46L05, 20B15.}

\begin{abstract} Graph inverse semigroups generalize the polycyclic inverse monoids and play an important role in the theory of $C^{\ast}$-algebras.
In this paper, we characterize such  semigroups and show how they may be completed, under suitable conditions, to form what we call the Cuntz-Krieger semigroup of the graph.
This semigroup is proved to be the ample semigroup of a topological groupoid associated with the graph, 
and the semigroup analogue of the Leavitt path algebra of the graph. 
\end{abstract}
\maketitle

\section{Introduction}

This paper is part of an ongoing programme to develop non-commutative Stone dualities \cite{Law6, Law7, Law8, LL}
linking inverse semigroups, topological groupoids and $C^{\ast}$-algebras.
Our goal here is to generalize \cite{Law22} from polycyclic inverse monoids to graph inverse monoids.
Recall that the polycyclic monoid $P_{n}$, where $n \geq 2$, first introduced by Nivat and Perrot \cite{Nivat},
is defined as a monoid with zero by the following presentation
$$P_{n} = \langle a_{1}, \ldots, a_{n}, a_{1}^{-1}, \ldots, a_{n}^{-1}
\colon \: a_{i}^{-1}a_{i} = 1 \,\mbox{and}\, a_{i}^{-1}a_{j} = 0, i \neq j \rangle.$$
It can easily be proved that every non-zero element of $P_{n}$ is of the form $yx^{-1}$ where
$x,y \in A_{n}^{\ast}$, the free monoid on the set $\{a_{1}, \ldots, a_{n} \}$
where the identity is $\varepsilon \varepsilon^{-1}$; here $\varepsilon$ is the empty string.
The product of two elements $yx^{-1}$ and $vu^{-1}$ is zero unless
$x$ and $v$ are {\em prefix-comparable}, meaning that one is a prefix of the other.
If they are prefix-comparable then
$$yx^{-1} \cdot vu^{-1} = \left\{
\begin{array}{ll}
yzu^{-1}   & \mbox{if $v = xz$ for some string $z$}\\
y(uz)^{-1} & \mbox{if  $x = vz$ for some string $z$}
\end{array}
\right.
$$
The polycyclic inverse monoids were independently introduced by Cuntz \cite{C}
and for this reason they are often referred to as {\em Cuntz (inverse) semigroups} in the $C^{\ast}$-algebra literature, such as page~2 of \cite{Pat1}.
However, we shall not use this terminology here: for us, the Cuntz inverse semigroup will be a different semigroup.
Polycyclic inverse monoids are also implicit in the work of Leavitt \cite{L1,L2,L3} on rings without invariant basis number.
There are clear parallels between the Cuntz algebras  and the Leavitt algebras observed in \cite{AP} and \cite{Tomforde}.
Graph inverse semigroups generalize the polycyclics in the following way.
The polycyclic inverse monoid $P_{n}$ is constructed from the free monoid on an $n$-letter alphabet.
Such a monoid can be viewed as the free category on the directed graph consisting of one vertex and $n$ loops.
Replacing free monoids by free categories, we get graph inverse semigroups first introduced by Ash and Hall \cite{AH} in 1975.
In \cite{CK}, Cuntz and Krieger introduced a class of $C^{\ast}$-algebras, constructed from suitable directed graphs, now known as {\em Cuntz-Krieger algebras}.
In 2005, Abrams and Pini \cite{AP} introduced what they called {\em Leavitt path algebras} as the algebra analogues of the Cuntz-Krieger algebras.
These are also the subject of \cite{Tomforde}.
The connection between graph inverse semigroups and the groupoids associated with Cuntz-Krieger algebras is spelled out by Paterson \cite{Pat2}
and, significantly for this paper, in the work by Lenz \cite{Lenz}.
In this paper, we prove three main theorems:
first, we characterize graph inverse semigroups;
second, we describe an order-theoretic completion of such semigroups to what we call Cuntz-Krieger semigroups;
third, we prove that under non-commutative Stone duality, Cuntz-Krieger semigroups may be associated with the \'etale topological groupoid constructed from
the Cuntz-Krieger $C^{\ast}$-algebra constructed from the original directed graph.
Cuntz-Krieger semigroups are equipped with a partially defined join operation
which enables them to be described in terms of {\em Cuntz-Krieger relations} $e = \sum_{f' \in \hat{e}} f'$. 
There are therefore three algebraic structures that arise in studying algebras, in the most general sense,
constructed from directed graphs:
Cuntz-Krieger semigroups,  Leavitt path algebras,  and  graph $C^{\ast}$-algebras.
Our theory also shows that  Cuntz-Krieger semigroups can be used to construct,
and may be constructed from, 
the topological groupoids usually associated with the graph $C^{\ast}$-algebras.

We shall assume the reader is familiar with the rudiments of semigroup theory as described in the early chapters of \cite{H}; 
in particular, Green's relations $\mathscr{H}, \mathscr{L}, \mathscr{R}, \mathscr{D}, \mathscr{J}$.
We note that a congruence is {\em $0$-restricted} if the congruence class of $0$ is just $\{0\}$;
it is {\em idempotent pure} if the only elements related to an idempotent are themselves idempotents.
For an introduction to inverse semigroup theory, Howie \cite{H} is a good starting place but we generally refer to \cite{Law1}
for more advanced theory. 
Throughout this paper, all inverse semigroups will be assumed to have a zero and $\leq$ will denote their natural partial order.
The set of idempotents of an inverse semigroup $S$ is denoted $E(S)$ and is a commutative idempotent subsemigroup.
Inverse semigroups are posets with respect to their natural partial orders.
A poset with zero $X$ is said to be {\em unambiguous}\footnote{Strictly speaking `unambiguous except at zero' but that is too much of a mouthful.}
if for all $x,y \in X$ whenever there exists $z \leq x,y$, where $z$ is nonzero, then either $x \leq y$ or $y \leq x$.
This concept, due to Birget \cite{Birget}, plays an important role in this paper; see also \cite{Hughes, Law12, Law4, Law5}.

\section{The characterization theorem}

There are a number of results in semigroup theory  \cite{Clifford, Leech, Law11, Exel2}  where a left cancellative structure is used to construct an inverse semigroup.
We shall characterize graph inverse semigroups in the spirit of these results but use yet another procedure which harks back to Clifford \cite{Clifford}.
We first state the main result proved in this section.
An inverse semigroup $S$ is said to be a {\em Perrot inverse semigroup} if it satisfies the following conditions:
\begin{description}

\item[{\rm (P1)}] The semilattice of idempotents of $S$ is {\em unambiguous}.

\item[{\rm (P2)}] The semilattice of idempotents of $S$  satisfies the {\em Dedekind height condition}, meaning that for each
non-zero idempotent $e$ the set  $\left| e^{\uparrow} \cap E( S ) \right| <  \infty$, where $e^{\uparrow}$ denotes the set of all elements above $e$.

\item[{\rm (P3)}] Each non-zero idempotent $e$ lies beneath a unique maximal idempotent denoted by $e^{\circ}$.

\item[{\rm (P4)}]  Each non-zero $\mathscr{D}$-class contains a maximal idempotent.

\end{description}

We say that a Perrot semigroup is {\em proper} if it satisfies  a stronger version of (P4):
each non-zero $\mathscr{D}$-class contains a {\em unique} maximal idempotent.
An inverse semigroup is {\em combinatorial} if the $\mathscr{H}$-relation is equality.
Our first theorem can now be stated.

\begin{theorem}\label{the: theorem_one}  
The graph inverse semigroups are precisely the combinatorial proper Perrot semigroups. 
\end{theorem}

We shall prove the above theorem in Section~2.2 by restricting some results we prove in Section~2.1.
In Section~2.3, we shall derive some properties of graph inverse semigroups that will be useful in Section~3.

\subsection{A general construction}

Throughout this paper, categories are small and objects are replaced by identities.
The elements of a category $C$ are called {\em arrows} and the set of identities of $C$ is denoted by $C_{o}$.
Each arrow $a$ has a {\em domain}, denoted by $\mathbf{d}(a)$, and a {\em codomain} denoted by $\mathbf{r}(a)$,
both of these are identities and $a = a\mathbf{d}(a) = \mathbf{r}(a)a$.
The convention in this paper is that $\exists ab$ in a category if and only if $\mathbf{d}(a) = \mathbf{r}(b)$.
Thus our arrows will be drawn like this
$f  \stackrel{a}{\leftarrow} e$.
Given identities $e$ and $f$, the set of arrows $eCf$ from $f$ to $e$ is called a {\em hom-set}
and $eCe$ is a monoid called the {\em local monoid at $e$}.
Elements in the same hom-set are said to be {\em parallel}.
A category is said to be {\em inverse} if for each arrow $a$ there is a unique arrow $a^{-1}$ such that $a = aa^{-1}a$ and $a^{-1} = a^{-1}aa^{-1}$.
Classical inverse semigroup theory can easily be extended to inverse categories and this extension will be carried out silently below whenever needed.
An arrow $a$ is {\em invertible} or an {\em isomorphism} if there is an arrow $a^{-1}$ such that $a^{-1}a = \mathbf{d}(a)$ and $aa^{-1} = \mathbf{r}(a)$.
A category in which every arrow is invertible is called a {\em groupoid}.
All groupoids are inverse categories.
The set of invertible elements of a category forms a groupoid.
A category is said to be {\em skeletal} if any isomorphisms belong only to local submonoids.
We define a category to be {\em left cancellative} if $ab = ac$ implies that $b = c$;
{\em right cancellative} categories are defined analogously.
A category of {\em cancellative} if it is both left and right cancellative.
An {\em ideal} $Z$ in a category $C$ is a non-empty subset with the property that if $a \in I$ and $ba$ is defined then $ba \in Z$, and dually.
In a category $C$, we may define {\em principal right ideals} $aC$, {\em principal left ideals} $Ca$, and {\em principal two-sided ideals} $CaC$.
We may therefore also define analogues of Green's relations $\mathscr{H}$, $\mathscr{L}$, $\mathscr{R}$, $\mathscr{D}$, and $\mathscr{J}$
just as in the monoid case.

Let $e$ be an idempotent in a category $C$.
It is said to {\em split} if there is an identity $i$
and arrows $r$ and $s$ such that $e = sr$ and $i = rs$.
Observe that $s = srs$ and $r = rsr$.
It follows that $e \, \mathscr{D} \. i$.
Thus if an idempotent splits it is $\mathscr{D}$-related to an identity.
The converse is also true.

A category $C$ has {\em zeros} if for each pair of identities $(e,f)$ there is a unique arrow $f \stackrel{0^{f}_{e}}{\leftarrow} e$
such that the following conditions hold:
if $i \stackrel{a}{\leftarrow} e \stackrel{0^{e}_{f}}{\leftarrow} f$ then
$a0^{e}_{f} = 0^{i}_{f}$, and dually.
We denote the set of all such arrows by $Z_{C}$ or just $Z$;
it  is an ideal of the category $C$.

A category $C$ is called a {\em Leech category} if it is left cancellative and  for all $a,b \in C$ if $aC \cap bC \neq \emptyset$ 
then there exists $c \in C$ such that $aC \cap bC = cC$.
A category $I$ is said to be a {\em Clifford category} if it is inverse, has zeros, and every non-zero idempotent is $\mathscr{D}$-related 
to an identity. It follows that in a Clifford category every non-zero idempotent splits.
A Clifford category is said to be {\em proper} if every non-zero idempotent is $\mathscr{D}$-related 
to a {\em unique} identity.
The goal of this section is to prove that (skeletal) Leech categories and (proper) Clifford categories are interdefinable.

Let $C$ be a Leech category.
Put
$$U = \{(a,b) \in C \times C \colon \mathbf{d}(a) = \mathbf{d}(b) \}.$$
Define a relation $\sim$ on $U$ as follows
$$(a,b) \sim (a',b') \Leftrightarrow (a,b) = (a',b')u$$
for some isomorphism $u \in C$.
This is an equivalence relation on $U$ and we denote the equivalence class containing $(a,b)$ by $[a,b]$.
We interpret this as follows.
Suppose that
$$\spreaddiagramrows{2pc}
\spreaddiagramcolumns{2pc}
\diagram
& 
&f
\\
&e 
& \lto^{a} \uto_{b}
\enddiagram$$
Then we regard $[a,b]$ as an arrow as follows
$$[e,e] \stackrel{[a,b]}{\longleftarrow} [f,f].$$
We define 
$\mathsf{Inv}(C)$
to be the set of equivalence classes $[a,b]$ together with a family of zero elements $0^{e}_{f}$ where $e,f \in C_{o}$.
Suppose that $[a,b][c,d]$ are potentially composable.
We define their product as follows.
If $bC \cap cC = \emptyset$ then the product is defined to be a zero;
if  $bC \cap cC = dC$ for some $d$ then write $d = bx = cy$ and define the product to be  $[ax,dy]$.
The following diagram should explain exactly what is going on.
$$\spreaddiagramrows{2pc}
\spreaddiagramcolumns{2pc}
\diagram
& 
& 
&
\\
& 
&
& \lto_{c} \uto_{d}
\\
&
& \lto^{a} \uto^{b}
& \lto^{x} \uto_{y}   \ulto^{d}
\enddiagram$$

\begin{proposition}\label{prop: fred} 
From each Leech category $C$, we may construct a Clifford category $\mathsf{Inv}(C)$ and an embedding $\iota \colon C \rightarrow \mathsf{Inv}(C)$ 
whose inage generates $\mathsf{Inv}(C)$.
If the Leech category $C$ is skeletal, then the Clifford category is proper.
\end{proposition}
\begin{proof} It is straightforward to check that $\mathsf{Inv}(C)$ is a category in which the identities are those elements of the form $[e,e]$ where $e$ is an identity of $C$.
It can be checked that this category is inverse; 
the non-zero idempotents are the elements of the form $[a,a]$ and for each each $[a,b]$ we have that $[a,b]^{-1} = [b,a]$.
If $[a,b]$ and $[c,d]$ are parallel we have that $[a,b] \leq [c,d]$ if and only if $(a,b) = (c,d)p$ for some $p$.
There is therefore an order isomorphism between the poset of principal right ideals of $C$ and the poset of non-zero idempotents of $\mathsf{Inv}(C)$.
Let $e \stackrel{a}{\rightarrow} f$ be an arrow of $C$.
Then $[a,a] \, \mathscr{D} \, [e,e]$.
We have therefore shown that $\mathsf{Inv}(C)$ is a Clifford category.

Let $e \stackrel{a}{\rightarrow} f$ be an arbitrary element of $C$.
Then $[a,e]$ is an element of $\mathsf{Inv}(C)$ from $[e,e]$ to $[f,f]$.
Let $i \stackrel{b}{\rightarrow} e$ be any element of $C$.
Then $[a,e][b,i] = [ab,i]$,
It is easy to check that we have an embedding $\iota \colon C \rightarrow \mathsf{Inv}(C)$.
Observe that if $[a,b]$ is a nonzero element and $i$ is the domain idempotent of both $a$ and $b$ then $[a,b] = [a,i][b,i]^{-1}$.
Thus $\mathsf{Inv}(C)$ is generated as an inverse category by the image of $\iota$.

Suppose now that the Leech category $C$ is skeletal.
Let $[e,e] \, \mathscr{D} \, [f,f]$ where $[e,e]$ and $[f,f]$ are identities.
Then there is $[a,b]$ such that $[a,a] = [e,e]$ and $[b,b] = [f,f]$.
Let $a = eu$ and $b = fv$ where $u$ and $v$ are isomorphisms.
Then $uv^{-1}$ is an isomorphism from $e$ to $f$.
By assumption, $e = f$. 
\end{proof}

We say that the Clifford category $\mathsf{Inv}(C)$ is {\em associated} with the Leech category $C$.

Now let $I$ be  a Clifford category.
Denote by $\mathsf{L}(I)$ the set of all non-zero $a \in I$ such that $a^{-1}a$ is an identity.
That is, all the elements that are $\mathscr{L}$-related to an identity.
Define a partial binary operation on  $\mathsf{L}(I)$ as follows:
$\exists a \cdot b$ if and only if $aa^{-1} \leq b^{-1}b $ in which case $a \cdot b = ab$.

\begin{proposition}\label{prop: daisy} 
From each Clifford category $I$, we may construct a Leech category  $(\mathsf{L}(I), \cdot)$.
If the Clifford category $I$ is proper, then the Leech category is skeletal.
\end{proposition}
\begin{proof} Suppose that $a \cdot b = a \cdot c$.
then $ab = ac$ and so $a^{-1}ab = a^{-1}ac$.
It follows that $b = c$.
It is straightforward to check that  $\mathsf{L}(I)$ is a left cancellative category.
Suppose that $a \cdot \mathsf{L}(I) \cap b \cdot \mathsf{L}(I) \neq \emptyset$.
Then $aI \cap bI \neq \emptyset$ and so  the idempotent $aa^{-1}bb^{-1}$ is non-zero in $I$.
By assumption, there exists $w \in I$ such that $w^{-1}w$ is an identity and $ww^{-1} = aa^{-1}bb^{-1}$ 
We claim that
$$a \mathsf{L}(I) \cap b \mathsf{L}(I) = w \mathsf{L}(I).$$
Let  $u = ac = bd$ be any element of the lefthand side.
We have that 
$$u = ac = bb^{-1}ac = bb^{-1}aa^{-1} ac = ww^{-1}ac.$$
Thus $u = w(w^{-1}ac)$.
Observe that 
$$(w^{-1}ac)^{-1}w^{-1}ac = (b^{-1}ac)^{-1}b^{-1}ac$$
and $b^{-1}ac = b^{-1}bd = d$.
It follows that $w^{-1}ac \in  \mathsf{L}(I)$.
Now let $wx$ be any element in the righthand side.
Then $wx = a(a^{-1}bb^{-1}wx)$.
It is easy to check that $a^{-1}bb^{-1}wx \in \mathsf{L}(I)$. 
We have therefore proved that  $(\mathsf{L}(I), \cdot)$ is a Leech category.

Suppose that the Clifford category $I$ has the property that each non-zero idempotent is $\mathscr{D}$-related to a unique identity.
Let $e$ and $f$ be identities of $\mathsf{L}(I)$.
These are just identities of $I$.
Suppose that $e$ and $f$ are isomorphic in  $\mathsf{L}(I)$.
Then they are $\mathscr{D}$-related in $I$ and so must be equal.
\end{proof}

We say that the Leech category  $\mathsf{L}(I)$  is {\em associated} with the Clifford category $I$.

We may now establish a correspondence between Leech categories and Clifford categories.

\begin{proposition} \mbox{}
\begin{enumerate}

\item Each Clifford category is isomorphic to the Clifford category constructed from its associated Leech category.

\item Each Leech category is isomorphic to the Leech category constructed from its associated Clifford category.

\item Under these correspondences skeletal Leech categories correspond to proper Clifford categories. 

\end{enumerate}
\end{proposition}
\begin{proof} (1). Let $I$ be a Clifford category.
Let $s \in I$ be a non-zero element.
By assumption, there is an identity $i$, not necessarily unique, such that $s \, \mathscr{D} \, i$.
It follows that we may find elements $a,b \in I$ such that $s = ab^{-1}$ and $bb^{-1} = a^{-1}a = i$
and $aa^{-1} = ss^{-1}$ and $bb^{-1} = s^{-1}s$.
The diagram below shows what is going on
$$\spreaddiagramrows{2pc}
\spreaddiagramcolumns{2pc}
\diagram
& 
&
& \llto_{s}
\\
& 
& i \ulto^{a} \urto_{b}
&
\enddiagram$$
It follows that $[a,b] \in  \mathsf{Inv}(\mathsf{L}(S))$.
Suppose now that there is an identity $j$ and elements $a_{1}$ and $b_{1}$ as above and such that $a = a_{1}b_{1}^{-1}$.
However, $i \, \mathscr{D} \, j$ and $i$ and $j$ are both identities.
There is therefore an isomorphism $j \stackrel{g}{\rightarrow} i$ such that $a_{1} = ag$ and $b_{1} = bg$.
We have therefore proved that 
there is a map $s \mapsto [a,b]$ from the non-zero elements of $S$ to the non-zero elements of $\mathsf{Inv}(\mathsf{L}(S))$.
Suppose that $s$ maps to $[a,b]$ and $t$ maps to $[c,d]$ and that $[a,b] = [c,d]$.
Then $(a,b) = (c,d)u$ for some isomorphism $u$.
Then $s = ab^{-1} = (cu)(du)^{-1} = cd^{-1} = t$.
Thus our map is injective.
It is clearly surjective.
Suppose that $st$ is defined and non-zero.
Let $s = ab^{-1}$ where $a$ and $b$ start at the identity $i$,
and $t = cd^{-1}$ where $c$ and $d$ start at the identity $j$.
In particular, $bb^{-1}cc^{1}$ is a non-zero idempotent.
Let $w \in I$ such that $w^{-1}w$ is an identity and $ww^{-1} = bb^{-1}cc^{1}$ 
Put $x = b^{-1}cc^{-1}$ and $y = c^{-1}bb^{-1}$.
Then $bx = cy$.
Observe that $st = (ax)(dy)^{-1}$.
It is now routine to check that we have defined an isomorphism from $I$ to  $\mathsf{Inv}(\mathsf{L}(S))$.

(2). Let $C$ be a Leech category.
Define a map from $C$ to $\mathsf{L}(\mathsf{Inv}(C))$ by $a \mapsto [a,e]$ where $e$ is the domain of $a$.
Then this is an injective functor.
Let $[a,b] \in \mathsf{Inv}(C)$ be such that $[a,b]^{-1}[a,b]$ is an identity.
Then $[b,b]$ is an identity and so $[b,b] = [e,e]$ for some identity $e$.
Thus $b = eu$ where $u$ is an isomorphism.
Thus $[a,b] = [a,eu] = [au^{-1},e] = [a',e]$.
Suppose that $[a,b] = [c,f]$ where $f$ is an identity.
Then $(c,f) = (a',e)u$ where $u$ is an isomorphism.
But $f = eu$.
Thus $e = f$ and $u$ is just an identity.
It follows that $c = a'$.
We have therefore also shown that our map above is surjective.

(3). This is immediate by (1) and (2) above and Propositions~\ref{prop: fred} and \ref{prop: daisy}.
\end{proof}

It will be useful to consider how some important properties are translated under the above correspondence.
An inverse semigroup or inverse category is said to be {\em $E^{\ast}$-unitary} if every element above a non-zero idempotent is itself an idempotent.

\begin{lemma}\label{le: dora} Let $C$ be a Leech category
\begin{enumerate}

\item The category $C$ is also right cancellative if and only if $\mathsf{Inv}(C)$ is $E^{\ast}$-unitary.

\item The local groups of $C$ are trivial if and only if $\mathsf{Inv}(C)$ is combinatorial.

\end{enumerate}
\end{lemma}
\begin{proof} (1). Suppose that $C$ is also right cancellative.
Let $[a,a] \leq [b,c]$.
Then $(a,a) = (b,c)p$.
Thus $bp = cp$.
By  right cancellativity, we have that $b = c$, as required.
Suppose now that $\mathsf{Inv}(C)$ is $E^{\ast}$-unitary.
Let $ab = cb = d$.
Then $[d,d] \leq [a,c]$.
By assumption, $[a,c]$ is an idempotent and so $a = c$, as required. 

(2). Suppose that the local groups of $C$ are trivial.
Let $[a,b] \, \mathscr{H} \, [c,d]$.
Then $a = cu$ and $b = dv$ where $u$ and $v$ are invertible.
It follows that $uv^{-1}$ is an invertible element in a local group ans so trivial.
Thus $u = v$ and so $[a,b] = [c,d]$.
Suppose now that  $\mathsf{Inv}(C)$ is combinatorial.
Let $u$ be an invertible element in the local group based at the identity $e$.
Then $[u,e] \, \mathscr{H} \, [e,e]$.
By assumption, we have that $[u,e] = [e,e]$.
Then $(u,e) = (e,e)v$ where $v$ is invertible.
It follows that $v$ is an identity and $u = e$, as required.
\end{proof}

\begin{remark}{\em 
When a Leech category $C$ has only trivial invertible elements, 
the equivalence class $[a,b]$ consists only of the element $(a,b)$.
It is convenient in this case to denote $[a,b]$ by $ab^{-1}$.
We shall do this in Section~2.2.}
\end{remark}

Our results so far have been couched in the language of inverse categories,
but it is a simple matter to convert them into the language of inverse semigroups.
We describe two constructions that do just that.

Let $I$ be an inverse category with zeros.
Denote the set of zeros by $Z$.
Adjoin a new symbol $0$ to $I$.
The resulting set $I^{0}$ becomes a semigroup when all undefined products in $I$ are defined to be equal to zero 
and the element $0$ functions as a zero.
The set $Z \cup \{0\}$ is a semigroup ideal of $I^{0}$.
The semigroup obtained by taking the Rees quotient of $I^{0}$ by $Z \cup \{0\}$ is denoted by
$I^{c}$ and called the {\em contraction} of $I$.

We say that an inverse semigroup $S$ with zero has {\em maximal idempotents} if for each non-zero idempotent $e$
there is a unique maximal idempotent $e^{\circ}$ such that $e \leq e^{\circ}$.
Let $S$ be an inverse semigroup with zero and maximal idempotents.
We consider all triples of the form $((xx^{-1})^{\circ},x,(x^{-1}x)^{\circ})$ where $x$ is non-zero
together with all triples of the form $(e,0,f)$ where $e$ and $f$ are arbitrary maximal idempotents.
We define the product $(e,x,f)(f,y,i) = (e,xy,i)$ and all other products are undefined.
We denote the resulting category by $S^{e}$ and call it the {\em expansion} of $S$
The proofs of the following are routine.

\begin{lemma} \mbox{}
\begin{enumerate}

\item For each inverse category with zeros $I$, we have that $I$ is isomorphic to $I^{ce}$.

\item For each inverse semigroup with zero and maximal idempotents $S$, we have that $S$ is isomorphic to $S^{ec}$.

\end{enumerate}

\end{lemma}

Let $C$ be a Leech category.
Put $\mathsf{S}(C) = \mathsf{Inv}(C)^{c}$ and call it the 
{\em inverse semigroup associated with the Leech category $C$.}
We may now translate the results of this section into inverse semigroup-theoretic terms.
Part (2) below uses the second part of Lemma~\ref{le: dora}.

\begin{theorem}\label{the: jack} There is bijective correspondence between Leech categories and inverse semigroups 
with zero having maximal idempotents and in which each non-zero idempotent is
$\mathscr{D}$-related to a maximal idempotent.
\begin{enumerate}

\item In the above correspondence, skeletal Leech categories correspond to the inverse semigroups in which each non-zero idempotent is
$\mathscr{D}$-related to a unique maximal idempotent.

\item In the above correspondence, Leech categories with trivial groupoids of isomorphisms 
correspond to those inverse semigroups which are combinatorial  and in which each non-zero idempotent is $\mathscr{D}$-related to a unique maximal idempotent.

\end{enumerate}
\end{theorem}

The above theorem deals with conditions (P3) and (P4) in the definition of Perrot semigroups.
In the next section, we shall deal with conditions (P1) and (P2).

\subsection{Proof of the characterization}

A category $C$ is said to be {\em right rigid} if $aC \cap bC \neq \emptyset$ implies that $aC \subseteq bC$ or $bC \subseteq aC$.
A {\em left Rees category} is a  left cancellative, right rigid category in which each principal right ideal is contained in only a finite number of distinct principal right ideals
in addition, we shall also require that there are non-invertible elements.
Left Rees categories are certainly Leech categories.
We may therefore ask for a characterization of their associated inverse semigroups. 

\begin{theorem}\label{the: types_leech} \mbox{}
\begin{enumerate}

\item Left Rees categories correspond to Perrot semigroups

\item Skeletal left Rees categories correspond to proper Perrot semigroups.

\item Left Rees categories with trivial groupoids of invertible elements correspond to combinatorial proper Perrot semigroups.

\end{enumerate}
\end{theorem} 
\begin{proof} The proof of these is immediate from Theorem~\ref{the: jack} and Lemma~\ref{le: dora} and the description of the poset of idempotents
in Proposition~\ref{prop: fred} in terms of the poset of principal right ideals of the Leech category.
\end{proof}

By virtue of the third part of the above theorem, it only remains to relate left Rees categories with trivial groupoids of invertible elements to free categories, which we now do.

A \emph{directed graph} $G$ is a collection of \emph{vertices} $G_0$ and a collection of \emph{edges} $G_1$ 
together with two functions $\dom , \ran  \colon G_1 \rightarrow G_0$ called the\emph{ domain} or {\em source} and the \emph{range} or {\em target}, respectively.
The {\em in-degree} of a vertex $v$ is the number of edges $x$ such that $\mathbf{r} (x) = v$
and the {\em out-degree} of a vertex $v$ is the number of edges $x$ such that $\mathbf{d} (x) = v$.
A {\em sink} is a vertex whose out-degree is zero and a {\em source} is a vertex whose in-degree is zero.
Two edges $x$ and $y$ \emph{match} if $\dom (x) = \ran (y)$.
A \emph{path} is any sequence of edges $x_1 \ldots x_n$ such that $x_i$ and $x_{i+1}$ match for all $i=1, \ldots , n_1$;
such a path looks like this $\stackrel{x_{1}}{\leftarrow} \stackrel{x_{2}}{\leftarrow} \ldots \stackrel{x_{n}}{\leftarrow}$.
The {\em length} $\left| x \right|$ of a path $x$ is the total number of edges in it.
The empty path, or path of length zero, at the vertex $v$ is denoted by $1_{v}$. 
The \emph{free category} $G^{\ast}$ generated by the directed graph $G$ is the set of all paths equipped with concatenation as the partial binary operation.
If $x,y \in G^{\ast}$ are such that either $x = yz$ or $y = xz$ for some path $z$ then we say that $x$ and $y$ are {\em prefix-comparable}.
The proof of the following is straightforward.

\begin{proposition}\label{prop: one_direction} 
Free categories are left Rees categories in which the groupoid of invertible elements is trivial.
\end{proposition}

Because of the above result,  given a directed graph $G$, 
we define the \emph{graph inverse semigroup} $P_{G}$ to be the inverse semigroup $\mathsf{Inv}(G^{\ast})^{c}$ using the notation from Section~2.1.
The free category has no non-trivial invertible elements and so each equivalence class is denoted by $xy^{-1}$ using Remark~2.6.
Thus the non-zero elements of $P_{G}$ are of the form $uv^{-1}$ where $u,v$ are paths in $\mathcal{G}$ with common domain.
With elements in this form, multiplication assumes the following shape:
\begin{equation*}
xy^{-1} \cdot uv^{-1} =
\begin{cases}
 xzv^{-1} & \mbox{if $u=yz$ for some path $z$}\\
 x \left( vz \right)^{-1} & \mbox{if $y=uz$ for some path $z$}\\
 0 & \mbox{otherwise.}\\
\end{cases}
\end{equation*}
Let $xy^{-1}$ and $uv^{-1}$ be non-zero elements of $P_{G}$. 
Then
$$xy^{-1} \leq uv^{-1}
\Leftrightarrow 
\exists p \in \mathcal{G}^{\ast} \text{ such that }  
x = up \text{ and } y = vp.$$
If $xy^{-1} \leq uv^{-1}$ or $uv^{-1} \leq xy^{-1}$ then we say $xy^{-1}$ and $uv^{-1}$ are \emph{comparable}.

It only remains to characterize free categories.
First we recall some results that were proved in a much more general frame in \cite{Law5}.
Because of the importance of the main result proved below, we felt it important to include all the details in this case.
Let $C$ be a category.
A non-invertible element $a \in C$ is said to be an {\em atom} iff $a = bc$ implies that either $b$ or $c$ is invertible.
A principal right ideal $aC$ is said to be {\em submaximal} if $aC \neq \ran(a)C$ and there are no proper principal right ideals between $aC$ and $\ran(a)S$.
The proofs of the following are routine.

\begin{lemma}\label{le: theta} Let $C$ be a left cancellative category. 
\begin{enumerate}
\item If $e = xy$ is an identity then $x$ is invertible with inverse $y$.
In addition,  the set of invertible elements is trivial iff for all identities $e$ we have that $e = xy$ implies that $x$ and $y$ are identities.
\item We have that $aC = bC$ iff $a = bg$ where $g$ is an invertible element.
\item $aC = eC$ for some identity $e$ iff $a$ is invertible.
\item The maximal principal right ideals are those generated by identities.
\item The element $a$ is an atom iff $aC$ is submaximal.
\end{enumerate}
\end{lemma}

The following theorem was inspired by similar characterizations of free monoids; see pp~103--104 of \cite{Lallement}.

\begin{theorem}\label{the: free}
A category is free if and only if it is a left Rees category having a trivial groupoid of invertible elements.
\end{theorem} 
\begin{proof} By Proposition~\ref{prop: one_direction},  only one direction needs proving.
Let $C$ be a left Rees category having a trivial groupoid of invertible elements.
We prove that it is isomorphic to a free category generated by a directed graph.
By assumption, there is at least one non-invertible element $a \in C$.
The principal right ideal $aC$ is therefore not maximal by parts (3) and (4) of Lemma~\ref{le: theta}.
By assumption, the poset of principal right ideals between $aC$ and $\mathbf{r}(a)C$ is linearly ordered and finite.
It follows that submaximal principal right ideals exist and so atoms exit by part (5) of Lemma~\ref{le: free}.
We have also proved that every non-maximal principal right ideal is contained in a submaximal principal right ideal.
In fact, it must be contained in a unique such ideal by right rigidity.
Let $X$ be a transversal of generators of the set of submaximal principal right ideals.
We may regard $X$ as a directed graph: the set of vertices is $C_{o}$ and if $a \in X$ then $\ran(a) \stackrel{a}{\longleftarrow} \dom(a)$.
We shall prove that the subcategory $X^{\ast}$ generated by $X$ is isomorphic to $C$ and that it is free.
Let $a \in C$ be a non-identity element.
If $aC$ is submaximal then $a$ is an atom and since we are assuming that the invertible elements are trivial it follows
by part (2) of  Lemma~\ref{le: theta} that $a \in X$.
Suppose, therefore, that $aC$ is not submaximal.
Then $aC \subseteq a_{1}C$ where $a_{1} \in X$.
Thus $a = a_{1}b_{1}$.
We now repeat this argument with $b_{1}$.
We have that $b_{1}C \subseteq a_{2}C$ where $a_{2} \in X$.
Thus $b_{1} = a_{2}b_{2}$.
Observe that $aC \subseteq a_{1}C \subseteq a_{1}a_{2}C$.
Continuing in this way, 
and using the fact that each principal right ideal is contained in only a finite set of principal right ideals,
we have shown that $a = a_{1} \ldots a_{n}$ where $a_{i} \in X$.
We have therefore proved that $C = X^{\ast}$.
It remains to show that each element of $C$ can be written uniquely as an element of $X^{\ast}$.
Suppose that
$$a = a_{1} \ldots a_{m} = b_{1} \ldots b_{n}$$
where $a_{i},b_{j} \in X$.
Then $a_{1}C \cap b_{1}C \neq \emptyset$.
But both principal right ideals are submaximal and so $a_{1}C = b_{1}C$.
Hence $a_{1} = b_{1}$.
By left cancellation we get that 
$$a_{2} \ldots a_{m} = b_{2} \ldots b_{n}.$$
If $m = n$ then $a_{i} = b_{i}$ for all $i$ and we are done.
If $m \neq n$ then we deduce that a product of atoms is equal to an identity
but this contradicts part (5) of Lemma~\ref{le: theta}.
\end{proof}
 
The proof of Theorem~\ref{the: theorem_one} now follows from Theorem~\ref{the: free} and the third part of Theorem~\ref{the: types_leech}.

\subsection{Some properties of graph inverse semigroups}

In this section,  we shall prove a few results about graph inverse semigroups needed in Section~3.
In an inverse semigroup $S$, 
elements $s$ and $t$ are said to be {\em compatible} if both $s^{-1}t$ and $st^{-1}$ are idempotents;
they are said to be  {\em orthogonal} if $s^{-1}t = 0 = st^{-1}$.
We denote by $s^{\downarrow}$ the set of all elements below $s$.
We often write $\mathbf{d}(s) = s^{-1}s$ and $\mathbf{r}(s) = ss^{-1}$ in inverse semigroups.
The proofs of the following may be found in \cite{Law1}.

\begin{lemma}\label{le: basics} Let $S$ be an inverse semigroup.
\begin{enumerate}

\item For each element $s$ in an inverse semigroup $S$ the subset $s^{\downarrow}$ is compatible.

\item If $s$ and $t$ are compatible then $s \wedge t$ exists and $\mathbf{d}(s \wedge t) = \mathbf{d}(s)\mathbf{d}(t)$, and dually.

\item If $s$ and $t$ are compatible and $\mathbf{d}(s) \leq \mathbf{d}(t)$ then $s \leq t$, and dually.

\item If $s \wedge t$ exists then $as \wedge at$ exists for any $a$ and $a(s \wedge t) = as \wedge at$, and dually.

\end{enumerate}
\end{lemma}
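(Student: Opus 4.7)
The plan is to prove the four claims in sequence, relying throughout on the standard characterization $u \leq v \iff u = v\mathbf{d}(u) \iff u = \mathbf{r}(u)v$ of the natural partial order, together with the fact that idempotents in any inverse semigroup commute. For~(1), if $u,v \leq s$ then $u = s\mathbf{d}(u)$ and $v = s\mathbf{d}(v)$, so
$$
u^{-1}v = \mathbf{d}(u)\,s^{-1}s\,\mathbf{d}(v) = \mathbf{d}(u)\mathbf{d}(s)\mathbf{d}(v),
$$
a product of commuting idempotents; the argument that $uv^{-1}$ is idempotent is symmetric.

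For~(2), with $s \sim t$, I propose $m = st^{-1}t$ as the meet. Since $t^{-1}t$ is idempotent, $m \leq s$. For $m \leq t$, I would use that the idempotent $st^{-1}$ equals its own inverse $ts^{-1}$, so $m = ts^{-1}t = t(s^{-1}t)$, and $s^{-1}t$ is also idempotent. For the universal property, any lower bound $u$ of $s,t$ satisfies $\mathbf{d}(u) \leq t^{-1}t$, hence
$$
u = s\mathbf{d}(u) = s(t^{-1}t)\mathbf{d}(u) = m\,\mathbf{d}(u) \leq m.
$$
Finally $\mathbf{d}(m) = t^{-1}ts^{-1}st^{-1}t$ collapses to $\mathbf{d}(s)\mathbf{d}(t)$ after commuting idempotents, and the range version is dual.

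Claim~(3) is then immediate: by~(2), $s \wedge t$ exists with $\mathbf{d}(s\wedge t) = \mathbf{d}(s)\mathbf{d}(t) = \mathbf{d}(s)$, and two comparable elements with the same domain coincide, so $s = s \wedge t \leq t$. For~(4), set $m = s \wedge t$. That $am \leq as, at$ is clear from $m \leq s,t$ and the fact that left multiplication preserves $\leq$. For the reverse, suppose $u \leq as$ and $u \leq at$; then $\mathbf{r}(u) \leq \mathbf{r}(as) \leq \mathbf{r}(a) = aa^{-1}$, which forces $u = aa^{-1}u$. Since $a^{-1}u \leq a^{-1}as \leq s$ and similarly $a^{-1}u \leq t$, one obtains $a^{-1}u \leq m$, and left multiplication by $a$ then yields $u \leq am$. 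The dual statements follow by the symmetric arguments.

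The only genuinely non-routine step is writing down the right candidate in~(2): the apparent asymmetry of $st^{-1}t$ between $s$ and $t$ is resolved by the identity $st^{-1} = ts^{-1}$, which lets the same expression be read as both $s\cdot t^{-1}t$ and $t \cdot s^{-1}t$. Once this candidate is in hand, the rest of the lemma reduces to careful bookkeeping with commuting idempotents and the two equivalent forms of the natural partial order.
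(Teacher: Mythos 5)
Your proof is correct. The paper gives no proof of its own for this lemma (it simply cites Lawson's \emph{Inverse semigroups}), and your argument --- in particular taking $st^{-1}t$ as the meet of a compatible pair and using $st^{-1}=ts^{-1}$ to see it as a restriction of both $s$ and $t$ --- is precisely the standard one found there.
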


The following is Remark~2.3 of \cite{Lenz}.

\begin{lemma}\label{le: jinxy} 
If $S$ is an $E^{\ast}$-unitary inverse monoid then $(S,\leq)$ is a meet semilattice.
\end{lemma}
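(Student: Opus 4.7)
The plan is to show that for any two elements $s,t \in S$, a meet $s \wedge t$ exists. I split into two cases depending on whether $s$ and $t$ have a non-zero common lower bound.

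First, if $0$ is the only element below both $s$ and $t$, then trivially $s \wedge t = 0$. The real work is in the other case: suppose there exists $u \in S$ with $0 \neq u \leq s$ and $u \leq t$. I want to show $s$ and $t$ are compatible, and then invoke Lemma~1.1(2) to conclude that $s \wedge t$ exists.

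To show compatibility, I need $s^{-1}t$ and $st^{-1}$ to be idempotents. From $u \leq s$ we get the standard identities $u = su^{-1}u$ and $u^{-1}u \leq s^{-1}s$, and similarly from $u \leq t$ we get $u = tu^{-1}u$ and $u^{-1}u \leq t^{-1}t$. Equating $su^{-1}u = tu^{-1}u$ and multiplying on the left by $s^{-1}$ yields
\[
u^{-1}u \;=\; s^{-1}s\,u^{-1}u \;=\; s^{-1}t\,u^{-1}u,
\]
so $u^{-1}u \leq s^{-1}t$. Since $u \neq 0$ gives $u^{-1}u \neq 0$, the hypothesis that $S$ is $E^{\ast}$-unitary forces $s^{-1}t$ to be an idempotent. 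The symmetric argument using $u = uu^{-1}s = uu^{-1}t$ and multiplication on the right by $t^{-1}$ shows $uu^{-1} \leq st^{-1}$, so $st^{-1}$ is likewise an idempotent. Hence $s \sim t$.

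With compatibility in hand, Lemma~1.1(2) supplies the meet $s \wedge t$ (and incidentally $\mathbf{d}(s \wedge t) = \mathbf{d}(s)\mathbf{d}(t)$). The main conceptual point — and the only real step — is recognizing that the existence of a single non-zero common lower bound already produces non-zero idempotents beneath $s^{-1}t$ and $st^{-1}$, which is exactly the situation where $E^{\ast}$-unitariness bites. I do not expect to use the monoid hypothesis in any essential way; the identity of $S$ plays no role in the argument, only the zero and the $E^{\ast}$-unitary condition.
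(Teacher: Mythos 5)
Your proof is correct and follows essentially the same route as the paper's: split on whether a non-zero common lower bound exists, use the $E^{\ast}$-unitary hypothesis to deduce that $s^{-1}t$ and $st^{-1}$ are idempotents (you simply fill in the computation that the paper leaves implicit), and then invoke Lemma~1.1 to get the meet from compatibility. Your closing observation that the monoid hypothesis is not needed is also accurate — the paper itself applies the result to semigroups.
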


The proof of the following result follows from part (1) of Lemma~\ref{le: dora} and the fact that free categories are cancellative.

\begin{proposition}\label{prop: lenz} 
Combinatorial proper Perrot semigroups are $E^{\ast}$-unitary.
\end{proposition}

An inverse semigroup is called an {\em inverse $\wedge$-semigroup} if each pair of elements has a meet.
Since graph inverse semigroups are $E^{\ast}$-unitary by Proposition~\ref{prop: lenz} the proof of the following is immediate
by Lemma~\ref{le: jinxy}.

\begin{corollary}\label{cor: wedge} 
Graph inverse semigroups are  inverse $\wedge$-semigroups.
\end{corollary}

Unambiguity is an important feature of graph inverse semigroups.

\begin{lemma} Let $S$ be an inverse semigroup whose poset of idempotents is unambiguous.
Then the partially ordered set $(S,\leq)$ is unambiguous if and only if $S$ is $E^{\ast}$-unitary. 
\end{lemma}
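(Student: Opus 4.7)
The plan is to prove both implications directly from the definitions, using Lemma~1.1 as the main tool for the harder direction.

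For the forward direction, I assume $(S,\leq)$ is an unambiguous poset. Take a non-zero idempotent $e$ with $e \leq s$. Since the natural partial order is preserved by inversion and multiplication, $e = e^{-1}e \leq s^{-1}s = \mathbf{d}(s)$, so $e$ is a non-zero common lower bound of $s$ and $\mathbf{d}(s)$. By unambiguity of $(S,\leq)$, either $s \leq \mathbf{d}(s)$ or $\mathbf{d}(s) \leq s$. In the first case $s$ lies below an idempotent and is therefore itself an idempotent. In the second case, $\mathbf{d}(s) \leq s$ forces $\mathbf{d}(s) = s \cdot \mathbf{d}(\mathbf{d}(s)) = s\mathbf{d}(s) = s$, so again $s$ is an idempotent. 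Hence $S$ is $E^{\ast}$-unitary.

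For the backward direction, I assume $S$ is an unambiguous $E^{\ast}$-unitary inverse semigroup and take $0 \neq z \leq s,t$. The plan is first to prove $s$ and $t$ are compatible and then use Lemma~1.1 to force comparability. From $z \leq s$ one gets $z = s\mathbf{d}(z)$ and $\mathbf{d}(z) \leq \mathbf{d}(s)$, so that $s^{-1}z = \mathbf{d}(s)\mathbf{d}(z) = \mathbf{d}(z)$, a non-zero idempotent. Since $z \leq t$ gives $s^{-1}z \leq s^{-1}t$, the $E^{\ast}$-unitary hypothesis makes $s^{-1}t$ an idempotent. The symmetric argument with $zt^{-1} = \mathbf{r}(z)$ shows $st^{-1}$ is an idempotent, so $s \sim t$.

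By Lemma~1.1(2), $s \wedge t$ exists with $\mathbf{d}(s \wedge t) = \mathbf{d}(s)\mathbf{d}(t)$. The idempotents $\mathbf{d}(s)$ and $\mathbf{d}(t)$ have $\mathbf{d}(z)$ as a common lower bound, so $\mathbf{d}(s)\mathbf{d}(t) \neq 0$; the assumed unambiguity of $S$ (i.e., of $E(S)$) then yields, without loss of generality, $\mathbf{d}(s) \leq \mathbf{d}(t)$. Applying Lemma~1.1(3) to the compatible pair $s,t$ gives $s \leq t$. Hence $(S,\leq)$ is unambiguous.

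The only place where real care is required is the backward direction, where one must resist the temptation to invoke $\mathbf{d}(z) \leq \mathbf{d}(s),\mathbf{d}(t)$ and jump directly to the conclusion: the unambiguity of $E(S)$ only compares $\mathbf{d}(s)$ and $\mathbf{d}(t)$, and one still needs compatibility (supplied by $E^{\ast}$-unitariness) together with Lemma~1.1(3) to lift that idempotent comparison up to a comparison of $s$ and $t$ themselves.
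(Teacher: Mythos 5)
Your proof is correct and matches the paper's argument in all essentials: for the direction assuming $E^{\ast}$-unitarity you establish compatibility of $s$ and $t$ from the non-zero lower bound, compare $\mathbf{d}(s)$ and $\mathbf{d}(t)$ via unambiguity of $E(S)$, and finish with Lemma~1.1(3), exactly as the paper does (your explicit computation $s^{-1}z=\mathbf{d}(z)$ simply fills in the paper's terser claim that $s^{-1}t$ and $st^{-1}$ lie above non-zero idempotents). The only divergence is in the converse, where you compare $s$ with $\mathbf{d}(s)$ and conclude in one step, whereas the paper compares $s$ first with $s^{-1}$ and then with $s^{2}$; your variant is marginally shorter but rests on the same idea.
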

\begin{proof} Let $S$ be an $E^{\ast}$-unitary inverse semigroup.
Let $0 \neq a \wedge b \leq a,b$.
Then $0 \neq \mathbf{d}(a \wedge b) \leq \mathbf{d}(a), \mathbf{d}(b)$.
By unambiguity, it follows that either $\mathbf{d}(a) \leq \mathbf{d}(b)$ or vice-versa.
We assume the former without loss of generality.
Thus $\mathbf{d} \leq \mathbf{d}(b)$.
However $a^{-1}b$ and $ab^{-1}$ are both above non-zero idempotents.
Thus from the fact that the semigroup is $E^{\ast}$-unitary we have that $a$ is compatible with $b$.
By Lemma~\ref{le: basics}, we have that $a \leq b$, as required

Let $(S,\leq)$ be an unambiguous poset. 
We prove that $S$ is $E^{\ast}$-unitary.
Let $0 \neq e \leq s$ where $e$ is an idempotent.
We prove that $s$ is an idempotent.
Clearly $e \leq s^{-1}$.
Thus $s$ and $s^{-1}$ are comparable.
If $s \leq s^{-1}$ then by taking inverses we also have that $s^{-1} \leq s$ and vice-versa.
It follows that $s = s^{-1}$.
Thus $s^{2} = ss^{-1}$ is an idempotent.
Now $s$ and $s^{2}$ are also comparable.
If $s \leq s^{2}$ then $s$ is an idempotent and we are done.
If $s^{2} \leq s$ then $s = ss^{-1}s = s^{3} \leq s^{2}$ and so $s \leq s^{2}$ and $s$ is again an idempotent.
\end{proof}

The proof of the following is now immediate.

\begin{corollary}\label{cor: unambiguous} 
The natural partial order on a graph inverse semigroup is unambiguous.
\end{corollary}

An inverse semigroup is said to be {\em $F^{\ast}$-inverse} if it is $E^{\ast}$-unitary and every non-zero element lies beneath 
a unique maximal element. 

\begin{lemma} 
Every graph inverse semigroup is $F^{\ast}$-inverse.
\end{lemma}
\begin{proof} Let $a$ be any non-zero element in a graph inverse semigroup $S$.
By assumption, the set $\mathbf{d}(a)^{\uparrow} \cap E(S)$ is finite and linearly ordered.
There is a map from $a^{\uparrow}$ to $\mathbf{d}(a)^{\uparrow} \cap E(S)$ given by $b \mapsto \mathbf{d}(b)$.
Let $e$ be the maximum element in the image of this map.
Suppose that $a \leq b,c$ are such that $\mathbf{d}(b) = \mathbf{d}(c) = e$.
Both $b^{-1}c$ and $bc^{-1}$ are idempotents and so $b \sim c$.
It follows by Lemma~ \ref{le: basics} that $b = c$.
Let $a \leq b$ where $\mathbf{d}(b) = e$.
Let $a \leq c$.
But $S$ is unambiguous and so $c < b$ or $b \leq c$.
The latter cannot hold because we would then have $e = \mathbf{d}(b) = \mathbf{d}(c)$ and so $b = c$ as above.
It follows that $c < b$ and so $b$ is the unique maximal element above $a$.
\end{proof}

Let $E$ be a poset with zero.
Given $e,f \in E$ we say that $e$ {\em immediately covers} $f$ if $e > f$ and there is no $g \in E$ such that $e > g > f$.
The elements of $E$ that cover zero are said to be {\em $0$-minimal}.
For each $e \in E$ define $\hat{e}$ to be the set of elements of $E$ that are immediately covered by $e$.
The poset is said to be {\em pseudofinite} if whenever $e > f$ there exists $g \in \hat{e}$
such that $e > g \geq f$, and for which the sets $\hat{e}$ are always finite.
Finally, the poset is said to be \emph{$0$-disjunctive} 
if for each $0 \neq f \in E$ and $e$ such that $0 \neq e < f$, there exists $0 \neq e' <f$ such that $e \wedge e' = 0$.

\begin{lemma}\label{le: compactness} Let $P_{G}$ be a graph inverse semigroup.
\begin{enumerate}

\item The inverse semigroup $P_{G}$ has no $0$-minimal idempotents if and only if the in-degree of each vertex is at least one.
 
\item  The inverse semigroup $P_{G}$ has a $0$-disjunctive semilattice of idempotents if and only if the in-degree of each vertex is either 0 or at least 2.

\item The semilattice of idempotents of $P_{G}$ is pseudofinite if and only if the in-degree of each vertex is finite.

\end{enumerate}
\end{lemma}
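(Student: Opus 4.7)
The plan is to analyze the semilattice of idempotents of $P_{G}$ concretely in terms of the free category $G^{\ast}$. By Lemma~2.2 applied to $C = G^{\ast}$, every non-zero idempotent of $P_{G}$ has the form $xx^{-1}$ for some finite path $x$ (including the empty paths $1_{v}$), and $xx^{-1} \leq yy^{-1}$ iff $x = yp$ for some $p \in G^{\ast}$. The guiding observation throughout is that a non-trivial right-extension $yp$ of $y$ requires a non-trivial path $p$ with $\mathbf{r}(p) = \mathbf{d}(y)$, and the first edge of such a $p$ has range $\mathbf{d}(y)$; thus the combinatorics of the idempotents strictly below $yy^{-1}$ is governed by the in-degree of the vertex $\mathbf{d}(y)$.

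For (1), I would show that $xx^{-1}$ is $0$-minimal precisely when $\mathbf{d}(x)$ has in-degree zero. If some edge $a$ satisfies $\mathbf{r}(a) = \mathbf{d}(x)$, then $(xa)(xa)^{-1}$ lies strictly between $0$ and $xx^{-1}$; conversely, in the absence of such an edge no non-identity extension of $x$ exists. Since $\mathbf{d}(1_{v}) = v$ ranges over every vertex of $G$, the absence of $0$-minimal idempotents is equivalent to every vertex having in-degree at least one.

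For (2), given $yy^{-1} < xx^{-1}$ with $y = xp$ and $p$ non-trivial, I need $zz^{-1} < xx^{-1}$ satisfying $(yy^{-1})(zz^{-1}) = 0$. Writing $z = xq$, the multiplication formula for $P_{G}$ makes the product vanish exactly when $xp$ and $xq$ fail to be prefix-comparable, which reduces to $p$ and $q$ having distinct first edges. Such a $q$ exists iff $\mathbf{d}(x)$ has at least two incoming edges. The existence of the strict inequality $yy^{-1} < xx^{-1}$ already forces in-degree at least $1$ at $\mathbf{d}(x)$, so $0$-disjunctiveness is equivalent to each vertex having in-degree either $0$ (a vacuous case, as then $xx^{-1}$ is $0$-minimal) or at least $2$.

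For (3), I would determine $\widehat{xx^{-1}}$ directly: its non-zero elements are the $(xa)(xa)^{-1}$ with $a$ an edge satisfying $\mathbf{r}(a) = \mathbf{d}(x)$, and $0$ belongs to it precisely when no such edge exists (so that $xx^{-1}$ is an atom). In every case $\widehat{xx^{-1}}$ is finite iff the in-degree of $\mathbf{d}(x)$ is finite. The first clause in the definition of pseudofiniteness holds unconditionally: for $xx^{-1} > yy^{-1} > 0$ with $y = xap'$ (and $a$ the first edge of $p$), the element $(xa)(xa)^{-1}$ is a cover of $xx^{-1}$ lying above $yy^{-1}$, while the case $yy^{-1} = 0$ is handled by any non-zero cover, or by $0$ itself when $xx^{-1}$ is an atom. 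Hence pseudofiniteness reduces to the finiteness of every $\widehat{e}$, equivalently to every in-degree being finite. The only real point requiring care across the three parts is the bookkeeping of the direction convention — namely that $\mathbf{r}$ sits at the reading-left end and right-multiplication by $p$ extends a path on its $\mathbf{d}$-side — after which each equivalence is a short verification.
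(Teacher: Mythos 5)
Your overall strategy is exactly the paper's: reduce everything to the observation that the idempotents strictly below $xx^{-1}$ are the $xp(xp)^{-1}$ with $p$ a non-empty path into $\mathbf{d}(x)$, so that the local structure of the semilattice at $xx^{-1}$ is controlled by the in-degree of $\mathbf{d}(x)$. Parts (1) and (3) are correct as written (the paper dismisses (3) as ``straightforward''; your cover computation is the right justification), and the construction half of (2) is also fine.

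There is, however, one step in (2) that does not hold as stated. You claim that $xp(xp)^{-1}$ and $xq(xq)^{-1}$ are orthogonal ``exactly when'' $p$ and $q$ have distinct first edges, and you then read off ``such a $q$ exists iff $\mathbf{d}(x)$ has at least two incoming edges.'' Orthogonality is equivalent to $p$ and $q$ failing to be prefix-comparable, and distinct first edges imply this, but not conversely: if $p = ab_1$ and $q = ab_2$ with $b_1 \neq b_2$, the two idempotents are orthogonal even though $p$ and $q$ share their first edge $a$. Consequently a vertex of in-degree $1$ can still admit orthogonal pairs below $xx^{-1}$, and the direction ``$0$-disjunctive $\Rightarrow$ in-degree $0$ or $\geq 2$'' does not follow from your biconditional. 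The repair is the one the paper makes: apply $0$-disjunctivity to the specific pair $xw(xw)^{-1} < xx^{-1}$ where $w$ is a \emph{single} edge into $\mathbf{d}(x)$. Any $xq(xq)^{-1}$ orthogonal to $xw(xw)^{-1}$ forces $w$ not to be a prefix of the non-empty path $q$, and since $\left| w \right| = 1$ this does force the first edge of $q$ to differ from $w$, yielding a second edge into $\mathbf{d}(x)$. With that substitution your argument for (2) closes, and the rest of the proposal stands.
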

\begin{proof}

(1) Let $e$ be a vertex with in-degree at least 1, and let $b$ be an edge with target $e$.
Let $x$ be a path with source $e$, where we include the possibility that $x$ is the empty path at $e$.
Then $xb(xb)^{-1} \leq xx^{-1}$.
It follows that if the in-degree of each vertex is at least 1 then there can be no 0-minimal idempotents.
Now let $e$ be a vertex with in-degree 0.
Then $1_{e}1_{e}^{-1}$ is a 0-minimal idempotent.
 
(2) Suppose that $E$ is $0$-disjunctive.
Let $v$ be any vertex.
Let $x$ be any path that starts at $v$ including the empty path $1_{v}$.
Suppose that the in-degree of $v$ is not zero.
Then there is at least one edge $w$ into $v$.
It follows that $xw(xw)^{-1} \leq xx^{-1}$.
By assumption, there exists $zz^{-1} \leq xx^{-1}$ such that $zz^{-1}$ and $xw(xw)^{-1}$ are orthogonal.
Now $z = xp$ for some non-empty path $p$.
It follows that $w$ is not a prefix of $p$ and so there is at least one other edge coming into the vertex $v$.
 
Suppose now that the in-degree of each vertex is either zero or at least two.
Let $yy^{-1} < xx^{-1}$ where $y  = xp$ where the target of $x$ is the vertex $v$.
Since $p$ is a non-empty path that starts at $v$ it follows that there is at least one other edge $w$ with target $v$
that differs from the first edge of $p$.
Thus $xw(xw)^{-1} \leq xx^{-1}$ and $xw(xw)^{-1}$ and $yy^{-1}$ are orthogonal.

(3) Straightforward.
\end{proof}

An inverse semigroup is said to be {\em distributive} if it has joins of all finite non-empty compatible subsets
and multiplication distributes over those finite joins that exist.
{\em Homomorphisms of distributive inverse semigroups} are those which preserve compatible jojns.
An inverse semigroup is said to be {\em Boolean } if it is distributive and its semilattice of idempotents is a generalized Boolean algebra.
Joins of orthogonal sets are called {\em orthogonal joins}.
An inverse semigroup is said to be {\em orthogonally complete} if it has all non-empty finite orthogonal joins
and multiplication distributes over any such joins that exist.
The following result connects the approach adopted in the special case \cite{Law22} with the one adopted here.

\begin{proposition}\label{prop: important}  Let $S$ be an inverse semigroup with zero whose natural partial order is unambiguous.
Then $S$ is distributive if and only if it is orthogonally complete.
\end{proposition}
\begin{proof} Only one direction needs proving.
Let $\{s_{1}, \ldots, s_{m}\}$ be a finite non-empty compatible subset of non-zero elements.
Consider $s_{1}$ and $s_{2}$.
Since they are compatible we must have that $s_{1} \wedge s_{2}$ exists by Lemma~\ref{le: basics}.
If $s_{1} \wedge s_{2} = 0$ then $s_{1}$ and $s_{2}$ are orthogonal.
If $s_{1} \wedge s_{2} \neq 0$ then $s_{1}$ and $s_{2}$ are comparable.
We may assume that $s_{1} \leq s_{2}$.
Discard the smaller element.
Continuing in this way, and relabelling if necessary, we obtain a non-empty orthogonal subset $\{s_{r}, \ldots, s_{m} \}$. 
Each of the discarded elements $s_{1}, \ldots, s_{r-1}$ is less than one of the elements in  $\{s_{r}, \ldots, s_{m} \}$.
It follows that
$$\bigvee_{i=1}^{m} s_{i} = \bigvee_{i=r}^{m} s_{i}.$$
Now let $t \in S$.
The set   $\{ts_{r}, \ldots, ts_{m} \}$ is also orthogonal.
Thus  $\bigvee_{i=r}^{m} ts_{i}$ is defined.
A routine argument shows that
$$\bigvee_{i=1}^{m} ts_{i} = \bigvee_{i=r}^{m} ts_{i}.$$
\end{proof}

\section{Completion: the Cuntz-Krieger semigroups}

\noindent
{\bf Note: }Throughout this section, unless otherwise stated, 
$G$ is a directed graph satisfying the condition that the in-degree of each vertex is at least 2 and finite.\\

We define the {\em Cuntz-Krieger inverse semigroup} $CK_{G}$ in the following way:
\begin{description}

\item[{\rm (CK1)}] It is Boolean.

\item[{\rm (CK2)}] It contains a copy of $P_{G}$ and every element of $CK_{G}$ is a join of a finite, non-empty compatible subset of $P_{G}$.

\item[{\rm (CK3)}] $e = \bigvee_{f' \in \hat{e}} f'$ for each maximal idempotent $e$ of $P_{G}$, where $\hat{e}$ is the set of idempotents immediately covered by $e$.

\item[{\rm (CK4)}] It is the freest distributive inverse semigroup satisfying the above conditions.

\end{description}
We shall prove that this inverse semigroup exists and show how to construct it.
In addition, we shall explain how it is related to the Cuntz-Krieger $C^{\ast}$-algebra via the associated topological groupoid
and briefly consider its representation theory.
In the case where $G$ has one vertex and $n$ loops, the graph inverse semigroup is just the polycyclic monoid $P_{n}$
and its completion is $C_{n}$,  the {\em Cuntz semigroup of degree $n$}, constructed in \cite{Law22}.

\subsection{The Lenz arrow relation}

The key concept we shall need in this is the {\em Lenz arrow relation} introduced in \cite{Lenz}.
Let $a,b \in S$ where $S$ is an inverse $\wedge$-semigroup.
We define $a \rightarrow b$ iff for each non-zero element $x \leq a$, we have that $x \wedge b \neq 0$.
Observe that $a \leq b \Rightarrow a \rightarrow b$.
We write $a \leftrightarrow b$ iff $a \rightarrow b$ and $b \rightarrow a$.

\begin{lemma}\label{le: lenz} Let $S$ be a $\wedge$-semigroup.
\begin{enumerate}

\item If $a \rightarrow 0$ then $a = 0$.

\item $\rightarrow$ is reflexive and transitive.

\item $\leftrightarrow$ is a $0$-restricted congruence.

\item Let $a_{1}, a_{2}, b_{1}, b_{2} \neq 0$.
Suppose that $a_{1} \leftrightarrow a_{2}$ and $b_{1} \leftrightarrow b_{2}$.
Then $a_{1} \wedge b_{1} = 0$ if and only if $a_{2} \wedge b_{2}$.
In the case where they are nonzero, we have that
$(a_{1} \wedge b_{1}) \leftrightarrow (a_{2} \wedge b_{2})$.

\item  If $S$ is in addition distributive and  $a_{1} \leftrightarrow a_{2}$ and $b_{1} \leftrightarrow b_{2}$
and $a_{1} \sim b_{1}$ and $a_{2} \sim b_{2}$ then $(a_{1} \vee b_{1}) \leftrightarrow (a_{2} \vee b_{2})$.

\end{enumerate}
\end{lemma}
\begin{proof} The proofs of (1), (2) and (3) are either immediate or can be deduced from \cite{Lenz,Law8}.

(4) Let $0 \neq a_{1} \wedge b_{1}$.
Then, in particular, $0 \neq a_{1} \leq a_{1}$.
By assumption, $0 \neq x \wedge a_{2}$.
But we clearly have $0 \neq x \wedge a_{2} \leq b_{1}$.
by assumption, $0 \neq (x \wedge a_{2}) \wedge b_{2}$.
The proof of the remainder of this part is now straightfoward.

(5) The proof is straightfoward once we observe that in a distributive inverse $\wedge$-semigroup
we have that  if $a \vee b$ exists then for any $c \in S$ we have that $c \wedge (a \vee b) = (c \wedge a) \vee (c \wedge b)$ \cite{Resende}.
\end{proof}

If $a,a_{1}, \ldots, a_{m} \in S$ define $a \rightarrow \{a_{1}, \ldots, a_{m}\}$ iff for each non-zero element $x \leq a$
we have that $x \wedge a_{i} \neq 0$ for some $i$.
Finally, we write 
$$\{a_{1}, \ldots, a_{m}\} \rightarrow \{b_{1}, \ldots, b_{n}\}$$
iff $a_{i} \rightarrow  \{b_{1}, \ldots, b_{n}\}$
for $1 \leq i \leq m$,
and we write
$$\{a_{1}, \ldots, a_{m}\} \leftrightarrow \{b_{1}, \ldots, b_{n}\}$$
iff both 
$\{a_{1}, \ldots, a_{m}\} \rightarrow \{b_{1}, \ldots, b_{n}\}$
and 
$\{b_{1}, \ldots, b_{n}\} \rightarrow \{a_{1}, \ldots, a_{m}\}$. 
A subset $Z \subseteq A$ is said to be a {\em cover} of $A$ if for each $a \in A$ there exists $z \in Z$ such that $a \wedge z \neq 0$.
A special case of this definition is the following.
A finite subset $A \subseteq a^{\downarrow}$ is said to be a {\em (Lenz) cover} of $a$ if $a \rightarrow A$.
The proof of the following is straightforward.

\begin{lemma}\label{le: lenz_cover} Let $S$ be an inverse $\wedge$-semigroup.
Suppose that $a \rightarrow \{a_{1}, \ldots, a_{n}\}$.
Then $\{a \wedge a_{i} \colon 1 \leq i \leq n \}$ is a Lenz cover of $a$.
\end{lemma}

An inverse $\wedge$-semigroup $S$ is said to be {\em separative} if and only if the relation $\leftrightarrow$ is equality.

\begin{proposition}\label{prop: separative} Let $S$ be an unambiguous $E^{\ast}$-unitary inverse semigroup.
Then $S$ is separative if and only if the semilattice of idempotents $E(S)$ is $0$-disjunctive.
\end{proposition}
\begin{proof} 
Suppose first that $S$ is separative.
We prove that $E(S)$ is $0$-disjunctive.
Let $0 \neq e < f$.
Then $e \rightarrow f$.
By assumption, we cannot have that $f \rightarrow e$.
Thus for some $e' \leq f$ we must have that $e' \wedge e = 0$.
It follows that $E(S)$ is $0$-disjunctive.

We shall now prove the converse.
We shall prove that if $s \nleq t$ where $s$ and $t$ are non-zero then there exists $0 \neq s' \leq s$ such that $s' \wedge t = 0$.
Before we do this, we show that this property implies that $S$ is separative.
Suppose that $s \leftrightarrow t$ and that $s \neq t$.
Then we cannot have both $s \leq t$ and $t \leq s$.
Suppose that $s \nleq t$. 
Then we can find $0 \neq s' \leq s$ such that $s' \wedge t = 0$ which contradicts our assumption.

We now prove the claim.
We shall use Corollary~\ref{cor: unambiguous} that tells us that the inverse semigroup itself is an unambiguous poset.
Suppose that $s \wedge t = 0$.
But then $0 \neq s \leq s$ and $s \wedge t = 0$.
We may therefore assume that $s \wedge t \neq 0$.
But then $s \leq t$ or $t < s$.
The former cannot occur by assumption and so $t < s$.
It follows that $\dom (t) < \dom (s)$.
The semilattice of idempotents is $0$-disjunctive
and so there exists an idempotent $e < \dom (s)$ such that
$\dom (t)e = 0$.
Put $s' = se$.
Then $0 \neq s' \leq s$.
We have to calculate $s' \wedge t$.
Suppose that $a \leq s', t$.
Then $\dom (a) \leq \dom (s')\dom (t) = e\dom (t) = 0$, as required.
\end{proof}

A homomorphism $\theta \colon S \rightarrow T$ to a distributive inverse semigroup is said to be a {\em cover-to-join} map
if for each element $s \in S$ and each finite cover $A$ of $s$
we have that $\theta (s) = \bigvee \theta (A)$.
A detailed discussion of cover-to-join maps and how they originated from the work of Exel \cite{E} and Lenz \cite{Lenz}
can be found in \cite{Law8}.
If $\{e_{1}, \ldots, e_{m}\}$ is a cover of the idempotent $e$ and is also an orthogonal set then we say
that it is an {\em orthogonal cover}.
Our goal for the remainder of this section is to prove the following result.

\begin{proposition}\label{prop: bertha} In graph inverse semigroups which are pseudofinite, $0$-disjunctive and have no $0$-minimal elements,
the system of Lenz covers is determined by the sets $\hat{e}$ where $e$ is a maximal idempotent.
\end{proposition}

The following three lemmas will play an important role in simplifying our calculations.

\begin{lemma}\label{le: properties_of_covers} In an inverse semigroup, we have the following.
\begin{enumerate}

\item Let $a_{1}, \ldots, a_{m}$ be a set of elements below the element $a$.
Then $\{ \dom (a_{1}), \ldots, \dom (a_{m}) \}$ covers  $\dom (a)$
if and only if $\{ a_{1}, \ldots, a_{m} \}$ covers  $a$.

\item  Let $\{e_{1}, \ldots, e_{m} \}$ cover the idempotent $e$.
Suppose that $e \, \mathscr{D} f$ where $e = a^{-1}a$ and $f = aa^{-1}$. Then $\{ae_{1}a^{-1}, \ldots, ae_{m}a^{-1} \}$ covers $f$.

\item Let  $\{e_{1}, \ldots, e_{m} \}$ be a set of non-zero idempotents beneath the idempotent $e$.
Suppose that for any non-zero idempotent $f < e$ we have that $f \leq e_{i}$ for some $i$.
Then  $\{e_{1}, \ldots, e_{m} \}$ is a cover of $e$.

\end{enumerate}
\end{lemma} 
\begin{proof} (1) Suppose that  $\{ \dom (a_{1}), \ldots, \dom (a_{m}) \}$ covers $\dom (a)$.
Let $0 \neq b \leq a$.
Then $0 \neq \dom (b) \leq \dom (a)$.
By assumption, there exists $i$ such that $\dom (a_{i}) \wedge \dom (b) \neq 0$.
But $a_{i},b \leq a$ implies that $a_{i}$ and $b$ are compatible.
Thus by Lemma~\ref{le: basics},  we have that $\dom (a_{i} \wedge b) = \dom (a_{i}) \wedge \dom (b) \neq 0$.
Thus $a_{i} \wedge b \neq 0$, as required.
Conversely, suppose that  $\{ a_{1}, \ldots, a_{m} \}$ covers $a$.
Let $0 \neq e \leq  \dom (a)$.
Put $b = ae$.
Then $0 \neq b \leq a$.
By assumption, there exists $a_{i}$ such that $b \wedge a_{i} \neq 0$.
By the same reasoning as above,
we have that $0 \neq \dom (b) \wedge \dom (a_{i})$, that is,  $0 \neq e \wedge \dom (a_{i})$.

(2) Let $0 \neq p \leq f$.
Then $a^{-1}pa \leq e$ and it is easy to check that $a^{-1}pa \neq 0$.
By assumption, there exists an $i$ such that $a^{-1}pa \wedge e_{i} \neq 0$.
But  $a^{-1}pa \wedge e_{i} \leq a^{-1}a$ and so
$a(a^{-1}pa \wedge e_{i})a^{-1} \neq 0$.
But $a(a^{-1}pa \wedge e_{i})a^{-1} = p \wedge ae_{i}a^{-1} \neq 0$, as required, by Lemma~\ref{le: basics}.

(3) The definition of cover is trivially satisfied.
\end{proof}

\begin{lemma}\label{le: properties_of_maps} 
Let $\theta \colon S \rightarrow T$ be a homomorphism to a distributive inverse semigroup.
\begin{enumerate}

\item Suppose that for each non-zero idempotent $e$ and cover $F$ of $e$ we have that $\theta (e) = \bigvee_{f \in F} \theta (f)$.
Then $\theta$ is a cover-to-join map.

\item Let $G$ be a transversal of the non-zero $\mathscr{D}$-classes of the idempotents.
Suppose that for each idempotent $e \in G$ and cover $F$ of $e$ we have that $\theta (e) = \bigvee_{f \in F} \theta (f)$.
Then $\theta$ is a cover-to-join map.

\end{enumerate}
\end{lemma}
\begin{proof} (1) Let $\{a_{1}, \ldots, a_{m} \}$ be a cover of $a$.
Then $\{\dom (a_{1}), \ldots, \dom (a_{m})\}$ is a cover of $\dom (a)$ by part (1) of Lemma~\ref{le: properties_of_covers}. 
By assumption
$\theta (\dom (a)) = \bigvee_{i=1}^{m} \theta (\dom (a_{i}))$.
Now multiplying on the left by $\theta (a)$ and using distributivity we get that
$\theta (a) = \bigvee_{i=1}^{m} \theta (a_{i})$.

(2) Let $\{e_{1}, \ldots, e_{m} \}$ be a cover of the arbitrary non-zero idempotent $e$.
Suppose that $e = a^{-1}a$ and $f = aa^{-1}$ where $f \in G$.
Then $\{ae_{1}a^{-1}, \ldots, ae_{m}a^{-1} \}$ covers $f$  by part (2) of Lemma~\ref{le: properties_of_covers}. 
By assumption,
$\theta (f) = \bigvee_{i=1}^{m} \theta (ae_{i}a^{-1})$.
Multiply this equation by $\theta (a)^{-1}$ on the left and $\theta (a)$ on the right,
and we get
$\theta (e) = \bigvee_{i=1}^{m} \theta (e_{i})$.
The result now follows by part (1) above.
\end{proof}

\begin{lemma}\label{le: orthogonal_subcover} 
Let $S$ be an inverse semigroup with a semilattice of idempotents that is unambiguous.
Then a homomorphism $\theta$ from $S$ to a distributive inverse semigroup $T$ is a cover-to-join map if and only if 
for each idempotent $e \in G$ and orthogonal cover $F$ of $e$ we have that $\theta (e) = \bigvee_{f \in F} \theta (f)$.
\end{lemma}
\begin{proof} Let $\{e_{1}, \ldots, e_{m}\}$ be an arbitrary cover of the idempotent $e$.
We use the proof of Proposition~2.21 to obtain an orthogonal subset where each discarded element
is strictly less than one of the elements in the cover.
This too will be a cover.
The proof of the claim is now straightforward.
\end{proof}

The above three lemmas lead to the following corollary which is directly applicable to graph inverse semigroups.

\begin{corollary}\label{cor: max} Let $S$ be an inverse semigroup with maximal idempotents in which each non-zero idempotent
is $\mathscr{D}$-related to a maximal idempotent and whose semilattice of idempotents is unambiguous.
Then whether a homomorphism to a distributive inverse semigroup is a cover-to-join map or not
is determined solely by the orthogonal covers of the maximal idempotents
\end{corollary}

We now focus our attention on covers in graph inverse semigroups.
We assume that our graph $G$ has the property that the in-degree of each vertex is finite.
By Lemma~\ref{le: compactness}, we have that $P_{G}$ is pseudofinite.
Thus for each non-zero maximal idempotent $e$ we have that the set $\hat{e}$ is actually an orthogonal cover of $e$.
If $xx^{-1}$ is a non-zero idempotent in $P_{G}$ then we define its {\em weight} to be the number $\left| x \right|$.
In the lemma below, we shall denote generic elements of $P_{G}$ by bold letters to avoid confusion.

\begin{lemma}\label{le: frankie} Let $P_{G}$ be a graph inverse semigroup in which the in-degree of each vertex is finite and at least 2.
Let $F = \{ \mathbf{e}_{1}, \ldots, \mathbf{e}_{m} \}$ be an orthogonal cover of the maximal idempotent $\mathbf{e} = ee^{-1}$,
where $e$ is the identity associated with the vertex we shall denote by $e$.
Suppose that $\mathbf{e}_{1} = xx^{-1}$ is an idempotent in $F$ of maximum weight at least 1.
Put $x = \bar{x}a_{1}$ where $a_{1}$ is an edge with target $f$.
Let $a_{1}, \ldots, a_{n}$ be all the edges with target $f$.
\begin{enumerate}

\item Then $\mathbf{f}_{j} = \bar{x}a_{j}a_{j}^{-1}\bar{x}^{-1} \in F$ for $1 \leq j \leq n$.

\item Put $F' = F \setminus \{\mathbf{f}_{1}, \ldots, \mathbf{f}_{n} \} \cup \{\bar{x}\bar{x}^{-1} \}$.
Then $F'$ is an orthogonal cover of $e$ and $\left| F' \right| < \left| F \right|$.

\item $F = F' \setminus \{\bar{x}\bar{x}^{-1} \} \cup \bar{x} \hat{\mathbf{f}} \bar{x}^{-1}$ where $\mathbf{f} = ff^{-1}$.

\end{enumerate}
\end{lemma}
\begin{proof} (1) The string $\bar{x}a_{j}$ has target the vertex $e$.
Thus $\mathbf{f}_{j} = \bar{x}a_{j}a_{j}^{-1}\bar{x}^{-1} \leq \mathbf{e}$. 
By assumption $\mathbf{f}_{j} \wedge \mathbf{e}_{i} \neq 0$ for some $i$.
Let $\mathbf{e}_{i} = yy^{-1}$.
Then $y$ and $\bar{x}a_{j}$ are prefix-comparable.
By assumption, $\mathbf{e}_{1}$ has maximum weight amongst all the idempotents in $F$ and so
$\bar{x}a_{j} = yz$ for some path $z$.
If $z$ were not empty, $y$ would be a prefix of $\bar{x}$ and so we would have that $\mathbf{e}_{1} < \mathbf{e}_{i}$ which is a contradiction
on the assumption that our cover is orthogonal.
It follows that $z$ is empty and so $\mathbf{f}_{j } = \mathbf{e}_{i}$. 

(2) The following argument is enough to establish that $F'$ is a cover of $\mathbf{e}$.
Let $0 \neq \mathbf{i} \leq \mathbf{e}$.
Suppose that $0 \neq \mathbf{i} \wedge \mathbf{f}_{j}$ for some $j$.
Put $\mathbf{i} = zz^{-1}$.
Then $z$ and $\bar{x}a_{j}$ are prefix-comparable.
It is straightforward to check that this implies that  $z$ and $\bar{x}$ are prefix-comparable
and so $0 \neq \mathbf{i} \wedge \bar{x}\bar{x}^{-1}$.
Since $n \geq 2$ we have that  $\left| F' \right| < \left| F \right|$.

It remains to show that $F'$ is an orthogonal set.
Let $\mathbf{e}_{i}$ be an element not equal to any of the $\mathbf{f}_{J}$.
Let $\mathbf{e}_{i} = ww^{-1}$.
Suppose that $\bar{x}$ and $w$ are comparable.
There are two possibilities.
First, $w = \bar{x}z$ for some $z$ where $z$ is non-empty.
It follows that $w = \bar{x}a_{k}z'$ for some $k$ and string $z'$, possibly empty.
If $z'$ were non-empty then $w$ would be too long being longer than $x$ which is a contradiction.
We would therefore have to have $w =  \bar{x}a_{k}$ for some $k$ which would also be a contradiction
since $w$ would then equal one of the $\mathbf{f}_{j}$.
The only other possibility is that $\bar{x} = wz$ for some non-empty string $z$.
But this would imply that $\bar{x}$ was a proper prefix of $w$.
It follows that $\bar{x}a_{k}$ is a prefix of $w$ for some $k$.
But this would imply that $F$ was not an orthogonal set.
It follows that $F'$ is an orthogonal set.

(3) This is immediate.
\end{proof}

The following is a precise statement and proof of Proposition~\ref{prop: bertha}.

\begin{proposition}\label{prop: bert} Let $G$ be a directed graph where the in-degree of each vertex is finite and at least 2.
Let $\theta \colon P_{G} \rightarrow T$ be a homomorphism to a distributive inverse semigroup
where $\theta (e) = \bigvee_{f \in \hat{e}} \theta (f)$ for each maximal idempotent $e$ in $P_{G}$.
Then $\theta$ is a cover-to-join map.
\end{proposition}
\begin{proof} Let $e$ be an arbitrary maximal idempotent.
Then by Corollary~\ref{cor: max}, 
it is enough to relate arbitrary orthogonal covers of $e$ to the specific orthogonal cover $\hat{e}$.
This can be achieved using induction and Lemma~\ref{le: frankie} and,
since the maximal idempotents are pairwise orthogonal, we can fix attention on all covers of a fixed maximal idempotent $e$.
Suppose that our claim holds for all orthogonal covers of $e$ with at most $p$ elements.
Let $F$ be an orthogonal cover of $e$ with $p+1$ elements.
By Lemma~\ref{le: frankie}, we may write
$$F = F' \setminus \{\bar{x}\bar{x}^{-1} \} \cup \bar{x} \hat{f} \bar{x}^{-1}$$
where $F'$ is cover of $e$ and $\left| F' \right| < \left| F \right|$.
By our induction hypothesis, we may write
$$\theta (e) = \bigvee_{f' \in F'} \theta (f').$$
By assumption, we may write
$$\theta (f) = \bigvee_{g \in \hat{f}} \theta (g).$$
Thus using distributivity, we have that
$$\theta (\bar{x}\bar{x}^{-1}) = \bigvee_{g \in \hat{f}} \theta (\bar{x}g \bar{x}^{-1}).$$
It follows that 
$$\theta (e) = \bigvee_{f \in F} \theta (f),$$  
as required.
\end{proof}

\subsection{Distributive completions}

Let $S$ be an inverse semigroup with zero.
An {\em order ideal} of $S$ is any subset which is closed downwards under the natural partial order.
A {\em finitely generated order ideal} is subset of the form $\{s_{1}, \ldots, s_{m}\}^{\downarrow}$; that is, all those elements of $S$ that lie beneath at least one element $s_{i}$ 
for some $i$. 
The order ideal $\{s_{1}, \ldots, s_{m}\}^{\downarrow}$ is said to be {\em compatible} if the elements $s_{i}$ are pairwise compatible.
The set of all finitely generated compatible order ideals of $S$ is denoted by $\mathsf{D}(S)$. 
If $A,B \in \mathsf{D}(S)$ then define their product to be $AB$.
In this way, $\mathsf{D}(S)$ becomes an inverse semigroup.
The natural partial order is just subset inclusion and the join of compatible elements is their union.
Thus it is a distributive inverse semigroup.
If $S$ is a $\wedge$-semigroup then $\mathsf{D}(S)$ is a $\wedge$-semigroup:
the intersection of  $\{s_{1}, \ldots, s_{m}\}^{\downarrow}$ and $\{t_{1}, \ldots, t_{n}\}^{\downarrow}$
is  $\{s_{i} \wedge t_{j} \colon 1 \leq i \leq m, 1 \leq j \leq n  \}^{\downarrow}$.
There is a homomorphism $\delta \colon S \rightarrow \mathsf{D}(S)$ given by $s \mapsto s^{\downarrow}$.
The inverse semigroup $\mathsf{D}(S)$ is characterized by the following universal property;
it is the finitary version of one due to Boris Schein.
See Theorem~1.4.24 of \cite{Law1}.

\begin{theorem}\label{the: alf} Let $\theta \colon S \rightarrow T$ be a homomorphism to a distributive inverse semigroup $T$.
Then there is a unique homomorphism of distributive inverse semigroups $\theta^{\ast} \colon \mathsf{D}(S) \rightarrow T$ such that
$\theta^{\ast} \delta = \theta$.
\end{theorem}

In any inverse semigroup $S$ whose natural partial order is unambiguous, we find that
the semigroup  $\mathsf{D}(S)$ can appear in a different guise from the way it first did in \cite{Law22}.
We shall now explain how and in what way.

\begin{lemma}\label{le: art} 
Let $S$ be an inverse semigroup with an unambiguous partial order.
Every finitely generated compatible order ideal of $S$ has a unique orthogonal generating set excluding zero.
\end{lemma}
\begin{proof}
Let $\{s_{1}, \ldots, s_{m}\}^{\downarrow}$ be a finitely generated compatible order ideal of such a semigroup.
By Proposition~\ref{prop: important},  we may find a generating set for  $\{s_{1}, \ldots, s_{m}\}^{\downarrow}$ which is orthogonal.
Suppose now that 
$\{s_{1}, \ldots, s_{m}\}^{\downarrow} = \{t_{1}, \ldots, t_{n}\}^{\downarrow}$ 
where both sets of generators are orthogonal.
The element $s_{1}$ must be less than or equal to exactly one element on the righthandside.
Suppose that $s_{1} \leq t_{j}$.
By the same reasoning, $t_{j} \leq s_{i}$ for a unique element $s_{i}$ on the lefthandside.
But then we have $s_{1} \leq t_{j} \leq s_{i}$.
It follows that $s_{1} =s_{i} = t_{j}$.
Hence each element in the generating set on the lefthandside occurs exactly once amongst the generating set of the righthandside.
It follows by symmetry that the two generating sets are equal.
Thus in an inverse semigroup with an unambiguous natural partial order,
finitely generated compatible order ideals are determined by their orthogonal generating sets.
These sets are unique as long as zero is omitted.
\end{proof} 

The above result together with the fact that graph inverse semigroups have an unambiguous natural partial order
means that we may in fact work with $\mathsf{D}(S)$ rather than $\mathsf{O}(S)$ as was done in  \cite{Law21}.

\subsection{The construction of the Cuntz-Krieger semigroup}

Let $G$ be a directed graph.
We may construct the inverse semigroup $P_{G}$ as in Section~2.2 and therefore the inverse semigroup $D = \mathsf{D}(P_{G})$ as in Theorem~3.11.
By Lemma~3.12, each non-zero element $X$ of $D$ may be written uniquely as $X = A_{X}^{\downarrow}$ for a unique orthogonal subset $A_{X}$ of $X$.
We define $X \equiv Y$ if and only if  $A_{X} \leftrightarrow A_{Y}$.
We also put $\{0\} \equiv \{0\}$.

\begin{proposition} The relation $\equiv$ is a $0$-restricted, idempotent pure congruence on $\mathsf{D}(P_{G})$
and $\mathsf{D}(P_{G})/\equiv$ is a distributive inverse $\wedge$-semigroup.
\end{proposition}
\begin{proof} 
Let $A = \{a_{1}, \ldots, a_{m} \}^{\downarrow}$ and  $B = \{b_{1}, \ldots, b_{n} \}^{\downarrow}$
where  $\{a_{1}, \ldots, a_{m} \}$ and $ \{b_{1}, \ldots, b_{n} \}$ are othogonal sets.
We prove first that $A \equiv B$ if and only if $A \leftrightarrow B$ where this is calculated in $D$.
Suppose that  $A \leftrightarrow B$.
Let $0 \neq x \leq a_{i}$.
Then $0 \neq x^{\downarrow} \leq A$.
By assumption, $0 \neq x^{\downarrow} \wedge B$.
It follows that there is $b \in B$ such that $0 \neq x \wedge b$.
But $b \leq b_{j}$ for some $j$ and so $0 \neq x \wedge b_{j}$.
By symmetry, we have proved that  $A \equiv B$.
To prove the converse, suppose that  $A \equiv B$
and let $C =  \{c_{1}, \ldots, c_{p} \}^{\downarrow}$, where $ \{c_{1}, \ldots, c_{p} \}$ is an orthogonal set.
Let $0 \neq C \leq A$.
Choose $0 \neq c \in C$.
Then $c \leq a_{i}$ for some $i$.
It follows that $0 \neq c \wedge b_{j}$ for some $j$.
But $c \leq c_{k}$ for some $k$.
Hence  $0 \neq c_{k} \wedge b_{j}$.
We have therefore shown that $C \wedge B \neq 0$, as required.
By Lemma~\ref{le: lenz}, it follows that $\equiv$ is a $0$-restricted congruence.
The congruence is idempotent pure;
to see why, let $A \leftrightarrow B$ where $A$ contains only non-zero idempotents.
Let $xy^{-1} \in B$.
Then there exists $uu^{-1} \in A$ such that $xy^{-1} \wedge uu^{-1} \neq 0$.
But this implies that $xy^{-1}$ lies above a non-zero idempotent and $P_{G}$ is $E^{\ast}$-unitary.
It follows that $xy^{-1}$ is an idempotent.
Since $xy^{-1}$ was arbitrary, $B$  consists entirely of idempotents, as claimed.
\end{proof}

Define $CK_{G}$ to be $\mathsf{D}(P_{G})/\equiv$ and define $\xi \colon P_{G} \rightarrow CK_{G}$
by $\xi (s) = [s^{\downarrow}]$, where $[x]$ denotes the $\equiv$-class containing $x$.
We call  $CK_{G}$ the {\em Cuntz-Krieger semigroup of the directed graph $G$}.
The following theorem summarizes the key properties of this semigroup. 

\begin{theorem}\label{the: two} Let $G$ be a directed graph with finite in-degrees.
\begin{enumerate}

\item For any directed graph $G$, there is a distributive invese semigroup $CK_{G}$
together with a homomorphism $\xi \colon P_{G} \rightarrow CK_{G}$  such that every element of $CK_{G}$ is a join
of a finite non-empty compatible subset of the image of $\xi$.
For each maximal idempotent $e$ in $P_{G}$, we have that 
$$\delta (e) = \bigvee_{f \in \hat{e}} \delta (f).$$ 
The homomorphism $\xi$ is injective if and only if $G$ has the additional property that the in-degree of each vertex is either 0 or at least 2.

\item Let $\theta \colon P_{G} \rightarrow T$ be a homomorphism to a distributive inverse semigroup
where $\theta (e) = \bigvee_{f \in \hat{e}} \theta (f)$ for each maximal idempotent $e$ in $P_{G}$
Then there is a unique homomorphism of distributive inverse semigroups $\bar{\theta} \colon CK_{G} \rightarrow T$ such that
$\bar{\theta} \xi = \theta$.

\end{enumerate}
\end{theorem}
\begin{proof} (1). The semigroup $CK_{G}$ is a distributive inverse semigroup by the above proposition.
The homomorphism $\xi$ is injective if and only if the Lenz arrow relation is equality.
But $P_{G}$ is unambiguous and $E^{\ast}$-unitary, and it follows by Proposition~\ref{prop: separative} that the semilattice of idempotents of $P_{G}$ must be $0$-disjunctive.
By part (2) of Lemma~\ref{le: compactness}, 
this means that the in-degree of each vertex of $G$ is either 0 or at least 2.
The claim on maximal idempotents follows from part (3) of Lemma~\ref{le: properties_of_covers}.

(2). Put $S =  P_{G}$. 
By Proposition~\ref{prop: bert},   
the map $\theta$ is actually a cover-to-join map.
Any homomorphism to a distributive inverse semigroup factors through $\mathsf{D}(S)$ by Theorem~\label{the: alf}. 
There is therefore a unique homomorphism of distributive inverse semigroups $\theta^{\ast} \colon \mathsf{D}(S) \rightarrow T$ such that
$\theta^{\ast} \delta = \theta$.
the map $\theta^{\ast}$ is defined as follows:
$$\theta^{\ast}(\{s_{1}, \ldots, s_{m} \}^{\downarrow}) = \bigvee_{i=1}^{n} \theta (s_{i})$$
where we may, by Lemma~\ref{le: art}, assume that the set $\{s_{1}, \ldots, s_{m}\}$ is orthogonal.
Suppose that $X = \{s_{1}, \ldots, s_{m} \} \leftrightarrow \{t_{1}, \ldots, t_{n} \} = Y$
where $X$ and $Y$ are orthogonal subsets.
We prove that  $\bigvee_{x \in X} \theta (x) = \bigvee_{y \in Y} \theta (y)$.
By definition we have that $s_{i} \rightarrow \{t_{1}, \ldots, t_{n}\}$.
By Lemma~\ref{le: lenz_cover},
it follows that $\{s_{1} \wedge t_{j} \colon 1 \leq j \leq n\}$ is a cover of  $s_{i}$.
By assumption, we have that
$$\theta (s_{i}) = \bigvee_{j=1}^{n} \theta (s_{i} \wedge t_{j}).$$
It follows that $\theta (s_{i}) \leq \bigvee_{j=1}^{m} \theta (t_{j})$.
Hence
$$\bigvee_{i=1}^{m} \theta (s_{i}) \leq  \bigvee_{j=1}^{m} \theta (t_{j}).$$
The result now follows by symmetry.
Thus our map actually factors through $CK_{G} = \mathsf{D}(S)/\equiv$.
We therefore have defined a homomorphism $\bar{\theta} \colon CK_{G} \rightarrow T$
of distributive inverse semigroups such that $\bar{\theta} \xi = \theta$.
This map is unique because each element of  $CK_{G}$ is a join of elements in the image of $\xi$.
\end{proof}

In the following section, we shall prove that the semilattice of idempotents of a Cuntz-Krieger semigroup is a generalised Boolean algebra.
This will enable us to conclude that these semigroups are Boolean $\wedge$-semigroups.

\subsection{A representation by partial bijections}

Given a directed graph $G$, we define $G^{\omega}$ to be the set of all right-infinite paths in the graph $G$.
Such paths have the form $w = w_{1}w_{2}w_{3} \ldots$ where the $w_{i}$ are edges in the graph and $\dom (w_{i}) = \ran (w_{i+1})$.
If $x \in G^{\ast}$, and so is a finite path in $G$, we write $xG^{\omega}$ to mean the set of all
right-infinite paths in $G^{\omega}$ that begin with $x$ as a finite prefix.

\begin{lemma}\label{le: tom} 
If $xG^{\omega} \cap yG^{\omega} \neq \emptyset$ then $x$ and $y$ are prefix comparable
and so either $xG^{\omega} \subseteq yG^{\omega}$ or $yG^{\omega} \subseteq xG^{\omega}$.
\end{lemma}

If $G$ has any vertices of in-degree 0, that is, {\em sources}, then a finite path may get stuck and we may not be able to continue
it to an infinite path. For this reason, we shall require that our directed graphs have the property that the in-degree of each vertex is at least 1.
There is a map $G^{\ast} \rightarrow G^{\omega}$ given by $x \mapsto xG^{\omega}$.
It need not be injective but it will be useful to us to have a sufficient condition when it is.

\begin{lemma}\label{le: jerry} Let $G$ be a directed graph in which the in-degree of each vertex is at least 2.
Then if $x$ and $y$ are finite paths in the free category on $G$ such that $xG^{\omega} = yG^{\omega}$ then $x = y$.
\end{lemma}
\begin{proof} The finite paths $x$ and $y$ must be prefix comparable and have the same target vertex $v$.
Therefore to show that they are equal, it is enough to prove that they have the same length.
Without loss of generality assume that $\left| x \right| < \left| y \right|$.
Then $y = xz$ for some finite path $z$.
Denote the source vertex of $x$ by $u$.
Suppose that $z = a\bar{z}$ where $a$ is one edge with target $u$.
By assumption, there is at least one other edge with target $u$; call this edge $b$.
We may extend $b$ by means of an infinite path $\omega$.
Thus by assumption $xb\omega = yb\omega'$ for some infinite string $\omega'$.
But this implies that $b = a$ which is a contradiction.
It follows that $x$ and $y$ have the same length and so must be equal.
\end{proof}

We shall now define a topology on $G^{\omega}$.
The subsets of the form $xG^{\omega}$ where $x$ is a finite path in $G$ form a basis for a topology on $G^{\omega}$ by Lemma~\ref{le: tom}.
The topology that arises is Hausdorff and the sets $xG^{\omega}$ are compact.
More generally, the compact-open subsets of this topology are precisely the sets $AG^{\omega}$ where $A$ is a finite subset of $G^{\ast}$.
See \cite{KPRR} for details.
It follows that with this topology $G^{\omega}$ is a Boolean space --- a Hausdorff topological space with a basis of compact-open subsets ---
and the compact-open subsets of such a space form a generalized Boolean algebra under the usual set-theoretic operations. 

A {\em representation} of an inverse semigroup $S$ is a homomorphism $\theta \colon S \rightarrow I(X)$, 
where $I(X)$ is the symmetric inverse semigroup on the set $X$.
Equivalently, this can be defined by means of a partially defined action
where $s \cdot x$ is defined if and only if $x \in \mbox{dom} (\theta (s))$ in which case it is equal to $\theta (s)(x)$.
Abstractly, an action of this type is a partial function $S \times X \rightarrow X$ mapping $(s,x)$ to $s \cdot x$
when $\exists s \cdot x$ satisfying the following two axioms:
\begin{description}

\item[{\rm (A1)}] If $\exists e \cdot x$ where $e$ is an idempotent then $e \cdot x = x$.

\item[{\rm (A2)}] $\exists (st) \cdot x$ if and only if $\exists s \cdot (t \cdot x)$ in which case they are equal.

\end{description}
An action is {\em effective} if for each $x \in X$ there exists $s \in S$ such that $\exists s \cdot x$.

We now define an action of $P_{G}$ on the set $G^{\omega}$ of right-infinite paths in the graph $G$
and thereby define a homomorphism $\theta \colon P_{G} \rightarrow I(G^{\omega})$.
Let $xy^{-1} \in P_{G}$ and $w \in G^{\omega}$.
We define $xy^{-1} \cdot w$ if and only if we may factorize $w = yw'$ where $w' \in G^{\omega}$; in which case,  $xy^{-1} \cdot w = xw'$.
This is well-defined since $\dom (x) = \dom (y)$.
It is easy to check that the two axioms (A1) and (A2) for an action hold.
We call this action the {\em natural action of the graph inverse semigroup on the space of infinite paths}.

\begin{lemma} Let $\theta \colon P_{G} \rightarrow I(G^{\omega})$ be the homomorphism associated with the above natural action.
\begin{enumerate}

\item The action leads to a $0$-restricted homomorphism $\theta$ if and only if there is no vertex of in-degree 0.

\item If the in-degree of each vertex is at least 2 then the homomorphism $\theta$ is injective.

\end{enumerate}
\end{lemma}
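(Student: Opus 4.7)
The plan for both parts pivots on determining when the action $\theta(xy^{-1})$ has non-empty domain: by definition this happens iff some $w \in G^{\omega}$ factors as $w = yw'$, which in turn holds iff there exists an infinite path $w' \in G^{\omega}$ with $\ran(w'_{1}) = \dom(y)$. So the question reduces to whether $\dom(y)$ admits an incoming infinite path.

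For part (1), I would first dispatch the contrapositive: if some vertex $v$ has in-degree $0$, then $s = 1_{v}1_{v}^{-1}$ is a non-zero idempotent in $P_{G}$ whose action requires an infinite $w$ with $\ran(w_{1}) = v$, which is impossible since no edge targets $v$. Thus $\theta(s) = 0$, and $\theta$ fails to be $0$-restricted. For the converse, assume every vertex has in-degree at least one. Then from any vertex $v$ (in particular $v = \dom(y)$) one can inductively construct an incoming infinite path by choosing, at each stage, some incoming edge at the current source vertex. This produces $w'$ and hence $w = yw' \in G^{\omega}$, witnessing $\theta(xy^{-1}) \neq 0$ for every non-zero $xy^{-1} \in P_{G}$.

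For part (2), the hypothesis in-degree $\geq 2$ already implies in-degree $\geq 1$, so (1) gives $0$-restrictedness. Now suppose $\theta(xy^{-1}) = \theta(uv^{-1})$, necessarily with both sides non-zero. Taking $ss^{-1}$ on both sides and using that $\theta$ is a homomorphism yields $\theta(xx^{-1}) = \theta(uu^{-1})$. But $\theta(xx^{-1})$ is exactly the identity partial bijection on the subset $xG^{\omega} \subseteq G^{\omega}$, and similarly for $\theta(uu^{-1})$; equality in $I(G^{\omega})$ therefore forces $xG^{\omega} = uG^{\omega}$. Lemma~1.5, whose hypothesis is precisely in-degree $\geq 2$, now yields $x = u$. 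A dual application with $s^{-1}s$ gives $y = v$, hence $xy^{-1} = uv^{-1}$ and $\theta$ is injective.

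The proof is largely mechanical and the only nontrivial content is imported from Lemma~1.5; no step presents a real obstacle. The main care lies in keeping the quantitative hypotheses straight: part (1) needs only in-degree $\geq 1$ so that a single choice of incoming edge can always be made at each stage of the construction, whereas the appeal to Lemma~1.5 in part (2) genuinely requires the strictly stronger assumption of in-degree $\geq 2$.
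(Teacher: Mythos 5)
Your proof is correct and follows essentially the same route as the paper: part (1) reduces to whether every finite path extends to an infinite one (which fails exactly at a vertex of in-degree $0$, witnessed by $1_{v}1_{v}^{-1}$), and part (2) extracts $xG^{\omega}=uG^{\omega}$ and $yG^{\omega}=vG^{\omega}$ from the equality of partial bijections and invokes Lemma~1.5. The only cosmetic difference is that you pass through $\theta(xx^{-1})=\theta(uu^{-1})$ explicitly, whereas the paper reads off the domain and range of the partial bijections directly.
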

\begin{proof} (1) Suppose that $\theta (xx^{-1}) = 0$ for some $x$.
This means that there are no right-infinite strings with prefix $x$.
This implies that there is some vertex of the graph which has in-degree zero.
On the other hand, if each vertex of the graph has in-degree at least one then the action is $0$-restricted:
given any finite path $x$ we may extend it to an infinite path $w = xw'$.
Then $xx^{-1} \cdot w$ is defined.

(2) Suppose that $\theta (xy^{-1}) = \theta (uv^{-1})$.
Then $yG^{\omega} = vG^{\omega}$ and so $y = v$ by Lemma~\ref{le: jerry}.
Similarly $xG^{\omega} = uG^{\omega}$ and so $x = u$ again by Lemma~\ref{le: jerry}.
\end{proof}

From now on, we shall assume that the in-degree of each vertex of the graph is (finite and) at least 2.
The representation $\theta \colon P_{G} \rightarrow I(G^{\omega})$ is injective and so
$P_{G}$ is isomorphic to its image $P'$.
Define $O_{G}$ to be the inverse subsemigroup of $I(G^{\omega})$ consisting of all non-empty finite unions of pairwise orthogonal elements of $P'$.

Let $X = \{x_{1}y_{1}^{-1}, \ldots, x_{m}y_{m}^{-1} \}$ be an orthogonal set in $P_{G}$.
Define a function $f_{X} \in I(G^{\omega})$ as follows
$$f_{X} \colon \bigcup_{i=1}^{m} y_{i}G^{\omega} \rightarrow \bigcup_{i=1}^{m} x_{i}G^{\omega}$$
where $f_{X}(w) = x_{i}w'$ if $w = y_{i}w'$ 

\begin{lemma} Let $X = \{x_{1}y_{1}^{-1}, \ldots, x_{m}y_{m}^{-1} \}$ and $Y = \{u_{1}v_{1}^{-1}, \ldots, u_{n}v_{n}^{-1} \}$ be two orthogonal sets in $P_{G}$.
Then $f_{X} = f_{Y}$ if and only if $X \leftrightarrow Y$.
\end{lemma}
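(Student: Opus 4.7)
The plan is to prove the two implications separately, using the explicit description of a general non-zero element below $x_i y_i^{-1}$ as $s = x_i z(y_i z)^{-1}$ (parametrized by finite paths $z$ starting at $\dom (y_i)$), together with the unambiguity of the poset $P_G$ from Lemma~1.7 and the length rigidity from Lemma~1.5.

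For the direction $X \leftrightarrow Y \Rightarrow f_X = f_Y$, I would take $w \in \dom (f_X)$, so $w = y_i w'$ for some $i$. I would look at the family of ``finite approximations'' $s_z := x_i z(y_i z)^{-1} \leq x_i y_i^{-1}$ as $y_i z$ ranges over finite prefixes of $w$. For each such $z$, the hypothesis $X \rightarrow Y$ produces some index $j$ with $s_z \wedge u_j v_j^{-1} \neq 0$; by Lemma~1.7 applied to the poset $(P_G,\leq)$ (which is unambiguous because $P_G$ is unambiguous and $E^{\ast}$-unitary), the two elements $s_z$ and $u_j v_j^{-1}$ are $\leq$-comparable. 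The key trick is to choose $z$ so that $|y_i z| > \max_j |v_j|$; the inequality $u_j v_j^{-1} \leq s_z$ is then impossible on length grounds, forcing $s_z \leq u_j v_j^{-1}$, which unpacks to $y_i z = v_j p$ and $x_i z = u_j p$ for a common $p$. Thus $v_j$ is a prefix of $w$, so $w \in \dom (f_Y)$, and a short computation gives $f_Y(w) = u_j p\, w'' = x_i z\, w'' = x_i w' = f_X(w)$, where $w''$ is the tail of $w$ beyond $y_i z$. Symmetry finishes the equality of $f_X$ and $f_Y$.

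For the converse $f_X = f_Y \Rightarrow X \leftrightarrow Y$, I would take any non-zero $s \leq x_i y_i^{-1}$, necessarily of the form $s = x_i z(y_i z)^{-1}$. Using the standing assumption that every vertex has in-degree at least $2$ (so in particular at least $1$), I extend $y_i z$ to an infinite path $w = y_i z w' \in G^{\omega}$. Then $w \in \dom (f_X) = \dom (f_Y)$, so $w = v_j w''$ and $f_Y(w) = u_j w''$ for some $j$. By Lemma~1.4 the prefixes $y_i z$ and $v_j$ of $w$ are prefix-comparable, giving two cases. If $v_j = y_i z\, p$, then equating $f_X(w) = x_i z w' = u_j w''$ forces $u_j = x_i z\, p$, and a direct check yields $u_j v_j^{-1} \leq s$. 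If instead $y_i z = v_j p$, the same equality forces $x_i z = u_j p$, giving $s \leq u_j v_j^{-1}$. In either case $s \wedge u_j v_j^{-1} \neq 0$, so $X \rightarrow Y$, and symmetry delivers $X \leftrightarrow Y$.

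The main obstacle I anticipate is the forward direction: one must choose the approximating prefix $y_i z$ long enough to eliminate one branch of the comparability dichotomy, and then verify that on the image side the actions match up correctly (which is where the identities $x_i z = u_j p$ and $y_i z = v_j p$ do the real work). Everything else is bookkeeping with the explicit multiplication in $P_G$ and Lemma~1.1(4) to move meets through concatenations.
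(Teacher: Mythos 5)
Your overall strategy coincides with the paper's own proof in both directions: the paper likewise establishes $X \leftrightarrow Y \Rightarrow f_{X} = f_{Y}$ by taking the finite approximations $x_{i}p(y_{i}p)^{-1}$ with $p$ ``sufficiently long'' so that the comparability forced by Lemma~1.7 can only go one way, and establishes the converse by comparing the restrictions of $f_{X}$ and $f_{Y}$ on cylinder sets and then invoking Lemma~1.5. Your explicit bound $\left| y_{i}z \right| > \max_{j} \left| v_{j} \right|$ is a clean way of making the paper's ``sufficiently long'' precise, and that direction of your argument is sound.

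There is, however, one step in your direction $f_{X} = f_{Y} \Rightarrow X \leftrightarrow Y$ that does not work as written. From the equation $x_{i}zp\,w'' = u_{j}w''$ for a \emph{single} infinite string $w''$ you cannot conclude $u_{j} = x_{i}zp$: in the graph with one vertex and two loops $a,b$ (which satisfies the standing in-degree hypothesis), taking $w'' = aaa\cdots$ gives $a\,w'' = aa\,w''$ although $a \neq aa$. Nor do the two prefixes have the same length a priori, since at this stage nothing forces $\left| u_{j} \right| - \left| v_{j} \right| = \left| x_{i} \right| - \left| y_{i} \right|$. The repair is exactly what the paper does: once you are in the case $v_{j} = y_{i}zp$, the inclusion $v_{j}G^{\omega} \subseteq y_{i}zG^{\omega}$ lets you run the identity $f_{X}(v_{j}\omega) = f_{Y}(v_{j}\omega)$ over \emph{all} infinite $\omega$, which yields $u_{j}G^{\omega} = x_{i}zpG^{\omega}$ as sets, and only then does Lemma~1.5 (where the in-degree at least $2$ hypothesis is genuinely used) give $u_{j} = x_{i}zp$; the symmetric adjustment is needed in your other case. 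With that quantification over all extensions inserted before appealing to Lemma~1.5, your argument becomes the paper's.
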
 
\begin{proof} By definition
$$f_{X} \colon \bigcup_{i=1}^{m} y_{i}G^{\omega} \rightarrow \bigcup_{i=1}^{m} x_{i}G^{\omega}$$
and
$$f_{Y} \colon \bigcup_{j=1}^{n} v_{j}G^{\omega} \rightarrow \bigcup_{j=1}^{n} u_{j}G^{\omega}$$
We suppose first that $f_{X} = f_{Y}$.
Thus
$$\{y_{1}, \ldots, y_{m} \}G^{\omega} = \{v_{1}, \ldots, v_{n} \}G^{\omega}
\text{ and }
\{x_{1}, \ldots, x_{m} \}G^{\omega} = \{u_{1}, \ldots, u_{n} \}G^{\omega}.$$

Let $0 \neq wz^{-1} \leq x_{i}y_{i}^{-1}$.
Then for some finite string $p$ we have that $w = x_{i}p$ and $z = y_{i}p$.
By definition, $f_{X}$ restricts to define a map from $x_{i}pG^{\omega}$ to $y_{i}pG^{\omega}$ 
such that for any infinite string $\omega$ for which the product is defined we have that $f_{X}(y_{i}p\omega  ) = x_{i}p \omega$.
By assumption, $f_{Y}(y_{i}p\omega) =  x_{i}p \omega$.
It follows that there are two possibilities.
Either $zG^{\omega}$ has a non-empty intersection with $v_{j}G^{\omega}$ with exactly one of the $j$,
in which case $zG^{\omega} \subseteq v_{j}G^{\omega}$ or it intersects a number of them
in which case $v_{j}G^{\omega} \subseteq zG^{\omega}$ for a number of the $j$.

Suppose the first possibility occurs.
Then $z = v_{j}q$ for some finite path $q$.
The map from $zG^{\omega}$ to $wG^{\omega}$ must be a restriction of the map from $v_{j}G^{\omega}$ to $u_{j}G^{\omega}$.
It follows that $wG^{\omega} = u_{j}qG^{\omega}$ and so by the above lemma we have that $w = u_{j}q$.
It follows that $wz^{-1} \leq u_{j}v_{j}^{-1}$.

We now suppose that the second possibility occurs.
Then for at least one $j$ we have that $v_{j} = zq$ for some finite path $q$.
In this case, we have that $u_{j}G^{\omega} = wqG^{\omega}$ and so by Lemma~\ref{le: jerry}
we have that $u_{j} = wq$.
It follows that $u_{j}v_{j}^{-1} \leq wz^{-1}$.

We have therefore shown that $X \rightarrow Y$.
The result follows by symmetry.

We now prove the converse.
Suppose that $X \leftrightarrow Y$.
We prove that $f_{X} = f_{Y}$.
Let $w$ be an infinite string in $\mbox{dom}(f_{X})$.
Then we may write it as $w = y_{i}\bar{w}$ for some infinite string $\bar{w}$.
By definition $f_{X}(y_{i} \bar{w}) = x_{i}\bar{w}$.
For every prefix $p$ of $\bar{w}$, the element $x_{i}p(y_{i}p)^{-1} \leq x_{i}y_{i}^{-1}$ 
and so there is an element $u_{j}v_{j}^{-1} \in Y$, depending on $p$,
such that $x_{i}p(y_{i}p)^{-1} \wedge  u_{j}v_{j}^{-1} \neq 0$.
This meet is equal to $x_{i}p(y_{i}p)^{-1}$ or $u_{j}v_{j}^{-1}$, whichever is smaller.
Therefore if we choose $p$ sufficiently long, 
we can ensure that the element $x_{i}p(y_{i}p)^{-1}$ cannot be greater than or equal to any element in $Y$.
It follows that there is a $j$ such that $x_{i}p(y_{i}p)^{-1} \leq u_{j}v_{j}^{-1}$ using the fact that $X \rightarrow Y$.
Thus $x_{i}p = u_{j}q$ and $y_{i}p = v_{j}q$ for some finite path $q$.
Put $\bar{w} = pw'$.
Then $w = y_{i}pw'$.
Thus 
$$f_{Y}(w) = f_{Y}(y_{i}pw') = f_{Y}(v_{j}qw' ) = u_{j}qw' = x_{i}pw' = x_{i}\bar{w} = f_{X}(w).$$
The result now follows by symmetry.
\end{proof}

It follows by the above lemma that the function $F \colon CK_{G} \rightarrow O_{G}$ given by $F([A^{0}]) = f_{A}$ is well-defined and a bijection.

\begin{theorem} Let $G$ be a directed graph in which the in-degree of each vertex is at least 2 and is finite.
Then the inverse semigroup $CK_{G}$ is isomorphic to the inverse semigroup $O_{G}$ defined as an inverse semigroup of partial bijections of the set $G^{\omega}$.
In particular, the semilattice of idempotents of $CK_{G}$ is a generalized Boolean algebra.
It follows that $CK_{G}$ is a Boolean inverse $\wedge$-semigroup.
\end{theorem}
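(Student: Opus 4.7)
The plan is to assemble the pieces that are already in place and then extract the two remaining structural facts, namely the homomorphism property of the bijection $F$ and the boolean algebra structure of its semilattice of idempotents. The bijectivity of $F \colon CK_G \to O_G$, $F([A^0]) = f_A$, has already been established via Lemma~3.12: the map is well-defined and injective because $f_X = f_Y$ iff $X \leftrightarrow Y$ iff $[X^0] = [Y^0]$, and it is surjective by the very definition of $O_G$ as finite unions of pairwise orthogonal images.

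First I would verify that $F$ is an inverse semigroup homomorphism. Multiplication in $D(P_G)$ is pointwise set multiplication, so if $X = \{x_iy_i^{-1}\}$ and $Y = \{u_jv_j^{-1}\}$ are orthogonal sets then $XY$ is the orthogonal set of nonzero products $(x_iy_i^{-1})(u_jv_j^{-1})$. On the other hand, $f_X f_Y$ is defined on those $w \in G^\omega$ for which $f_Y(w)$ lies in $\bigcup y_i G^\omega$, and straightforward unpacking shows this matches the action of $XY$. Since $\theta \colon P_G \to I(G^\omega)$ is a homomorphism by Lemma~3.11, each single product gives the corresponding partial bijection, and orthogonality is preserved, so $f_X f_Y = f_{XY}$; hence $F$ is a homomorphism, and together with bijectivity an isomorphism.

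Next I would describe the idempotents of $O_G$ concretely. An idempotent has the form $f_E$ for an orthogonal set $E = \{y_1y_1^{-1}, \ldots, y_my_m^{-1}\}$ of idempotents in $P_G$, and this $f_E$ is the identity on the compact-open set $\bigcup_{i=1}^{m} y_iG^\omega$. By the discussion preceding Lemma~1.5 in the introduction, the compact-open subsets of $G^\omega$ are precisely the sets $AG^\omega$ for $A$ a finite subset of $G^\ast$, and they form a (generalized) boolean algebra under set-theoretic operations. Under $F$ the semilattice of idempotents of $CK_G$ therefore maps isomorphically to this boolean algebra of compact-open subsets, which transports a boolean algebra structure back to $E(CK_G)$.

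Finally, the completeness claim follows by invoking Lemma~1.3: Proposition~3.10 already shows $CK_G$ is orthogonally complete, and we have just argued its semilattice of idempotents is a boolean algebra, so $CK_G$ is complete. The main obstacle I anticipate is the bookkeeping verification that $F$ preserves multiplication — not conceptually difficult, but requiring care to show that a nonzero product $(x_iy_i^{-1})(u_jv_j^{-1})$ in $P_G$ corresponds exactly, via $\theta$, to the relevant restriction of $f_X f_Y$, and that the resulting family is again a pairwise orthogonal set so that $F$ lands in $O_G$; once this is in hand the boolean algebra identification is a direct translation of the topological description already given in the preamble.
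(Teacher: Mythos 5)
Your proposal is correct and follows essentially the same skeleton as the paper: Lemma~3.12 gives the well-defined bijection $F$, the idempotents of $O_G$ are identified with the compact-open subsets $AG^{\omega}$ of the boolean space $G^{\omega}$, and Lemma~1.3 upgrades orthogonal completeness (Proposition~3.10) to completeness. The one place you diverge is the homomorphism check: the paper invokes the reduction stated in Section~1.2 and verifies three easier facts separately (that $F$ preserves restricted products, the natural partial order, and meets of idempotents), whereas you verify full multiplicativity $f_{XY}=f_Xf_Y$ directly; your route works, but to close it you should say explicitly that it rests on distributivity of composition over finite orthogonal joins in the complete inverse monoid $I(G^{\omega})$, so that $f_Xf_Y=\bigvee_{i,j}\theta(x_iy_i^{-1})\theta(u_jv_j^{-1})=\bigvee_{i,j}\theta\bigl((x_iy_i^{-1})(u_jv_j^{-1})\bigr)=f_{XY}$, with Lemma~3.2 guaranteeing that $XY$ is again orthogonal. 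Either verification is acceptable; the paper's three-case split trades a general lemma for shorter individual computations.
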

\begin{proof}
We have defined a bijection from $F \colon CK_{G} \rightarrow O_{G}$.
It remains to show that this is a homomorphism.
From \cite{Law1}, we can simplify this proof by splitting it up into three simple cases.\\

\noindent
(Case~1) Check that if $\dom (X) = \ran (Y)$ then $F(XY) = F(X)F(Y)$.\\
Let $X = \{x_{1}y_{1}^{-1}, \ldots, x_{m}y_{m}^{-1} \}$ and $Y = \{u_{1}v_{1}^{-1}, \ldots, u_{n}v_{n}^{-1} \}$.
By assumption,  $\dom (X) = \ran (Y)$ and so $m = n$ and $y_{i}y_{i}^{-1} = u_{i}u_{i}^{-1}$.
It follows that 
$$XY = \{x_{1}v_{1}^{-1}, \ldots, x_{n}v_{n}^{-1} \},$$
remembering orthogonality.
It is now easy to check that $F(XY) = F(X)F(Y)$.\\

\noindent  
(Case~2) Check that if $X \leq Y$ then $F(X) \leq F(Y)$.\\
Suppose that $X \leq Y$.
Let $xy^{-1} \in X$.
Then $xy^{-1} \leq uv^{-1}$ for some $uv^{-1} \in Y$.
Thus $x = up$ and $y = vp$ for some finite path $p$.
It follows that $xG^{\omega} \subseteq uG^{\omega}$ and $yG^{\omega} \subseteq vG^{\omega}$.
It is simple to check that $F(xy^{-1}) \leq F(uv^{-1})$.
Thus by glueing the separate functions together we have that $F(X) \leq F(Y)$.\\ 

\noindent
(Case~3) Check that if $X$ and $Y$ are idempotents then $F(X \wedge Y) = F(X)F(Y)$.\\
Observe first that $xx^{-1} \leq yy^{-1}$ if and only if $xG^{\omega} \subseteq yG^{\omega}$
and that $xx^{-1} \wedge yy^{-1} = zz^{-1}$ if and only if $xG^{\omega} \cap yG^{\omega} = zG^{\omega}$.
Let $X = \{x_{1}x_{1}^{-1}, \ldots, x_{m}x_{m}^{-1} \}$ and $Y = \{u_{1}u_{1}^{-1}, \ldots, u_{n}u_{n}^{-1} \}$.
Then $X \wedge Y$ is constructed by forming all possible meets $x_{i}x_{i}^{-1} \wedge u_{j}u_{j}^{-1}$.
But this translates into forming all possible intersections $x_{i}G^{\omega} \cap u_{j}G^{\omega}$.
The result is now clear.\\  

The semilattice of idempotents of $O_{G}$ is in bijective correspondence with the subsets of $G^{\omega}$
of the form $XG^{\omega}$ where $X$ is a finite set of finite paths in $G$.
These are precisely the compact-open subsets of the topological space $G^{\omega}$
and form a generalized Boolean algebra.
\end{proof}

\begin{remark}
{\em Any $0$-restricted homomorphism $\theta \colon P_{G} \rightarrow I(X)$ is called a {\em strong representation} of $P_{G}$
if for each maximal idempotent $e$ of $P_{G}$ we have that $\theta (e) = \bigvee_{f' \in \hat{e}} \theta (f')$.
Strong representations of just polycyclic inverse monoids are already interesting; see, \cite{B, JL, K1, K2, Law1, Law3}.
By the theory we have developed,
every strong representation  $\theta \colon P_{G} \rightarrow I(X)$ gives rise to a
homomorphism $\bar{\theta} \colon CK_{G} \rightarrow I(X)$ of distributive inverse semigroups.
The relationship between $CK_{G}$ and $P_{G}$ is therefore analogous to the relationship between
the group-ring of a group and the group itself;
the partial join operation on distributive inverse semigroups gives them a ring-like character.}
\end{remark}

\subsection{The Cuntz-Krieger semigroup as an ample semigroup}

This topic is taken up in more depth in \cite{Law8}.
Here we shall just sketch out the key result.
We shall use the following notation.
Let $w = uw'$ where $w,w'$ are infinite strings and $u$ is a finite string.
Define $u^{-1}w = w'$.
Given a directed graph $G$ a groupoid $\mathcal{G}$ is defined as follows.
Its elements consist of triples $(w,k,w') \in G^{\omega} \times \mathbb{Z} \times G^{\omega}$ where
there are finite strings $u$ and $v$ such that $u^{-1}w = v^{-1}w'$ and $\left| v \right| - \left| u \right| = k$.
The groupoid product is given by $(w,k,w')(w',l,w'') = (w,k+l,w'')$ and $(w,k,w')^{-1} = (w',-k,w)$.
A basis for a topology is given as follows.
For each pair $x,y \in G^{\ast}$ define $Z(x,y)$ to consist
of all groupoid elements $(w,k,w')$ where $x^{-1}w = y^{-1}w'$ and $k = \left| y \right| = \left| x \right|$.
Observe that under our assumptions on $G$, the sets $Z(x,y)$ are always non-empty.
It can be shown that this is a basis, and that with respect to the topology that results the groupoid
$\mathcal{G}$ is an \'etale, Hausdorff topological groupoid in which the sets $Z(x,y)$ are compact-open bisections.
The space of identities of this groupoid is homeomorphic to the usual topology defined on $G^{\omega}$ \cite{KPRR}.
With our usual assumptions on the directed graph $G$ this makes $\mathcal{G}$ what we have called a {\em Hausdorff Boolean groupoid} in \cite{LL}.
The compact-open bisections of the groupoid $\mathcal{G}$ form an inverse semigroup called the {\em ample semigroup} of $\mathcal{G}$.
We shall prove that this semigroup is the Cuntz-Krieger semigroup $CK_{G}$.
The proof of part (1) of the following lemma is straightforward and part (2) is Lemma~2.5 of \cite{KPRR}.

\begin{lemma}\label{le: bertha} With the above notation, the following hold:
\begin{enumerate}

\item
$$
Z(x,y)Z(u,v) = \left\{
\begin{array}{ll}
Z(x,vz) & \mbox{if $y = uz$} \\
Z(yz,v) & \mbox{if $u = yz$}\\
\emptyset & \mbox{else}
\end{array}
\right.
$$

\item
$$
Z(x,y) \cap Z(u,v) = \left\{
\begin{array}{ll}
Z(x,y) & \mbox{if $xy^{-1} \leq  uv^{-1}$} \\
Z(u,v) & \mbox{if $uv^{-1} \leq xy^{-1}$}\\
\emptyset & \mbox{else}
\end{array}
\right.
$$
\end{enumerate}
\end{lemma}

Denote by $\mathsf{B}(\mathcal{G})$ the inverse semigroup of compact-open bisections of the topological groupoid $\mathcal{G}$.

\begin{theorem} The Cuntz-Krieger semigroup $CK_{G}$ is the ample semigroup of the topological
groupoid $\mathcal{G}$ constructed from the directed graph $G$.
\end{theorem}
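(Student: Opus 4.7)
The plan is to exhibit an isomorphism between $CK_G$ and $\mathsf{B}(\mathcal{G})$ by invoking the universal property already proved in Theorem~3.16. First I would define a map $\Phi \colon P_G \rightarrow \mathsf{B}(\mathcal{G})$ by $xy^{-1} \mapsto Z(x,y)$ and $0 \mapsto \emptyset$. Lemma~3.18 shows that $\Phi$ respects multiplication (the three cases in the formula for $Z(x,y)Z(u,v)$ match exactly the formula for the product $xy^{-1} \cdot uv^{-1}$ in $P_G$), and Lemma~3.19 shows that $\Phi$ respects the natural partial order and the meet of idempotents, so $\Phi$ is a homomorphism of inverse semigroups.

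Next I would verify that $\mathsf{B}(\mathcal{G})$ is a complete inverse semigroup: the space of identities is homeomorphic to $G^{\omega}$ (a boolean space), so its compact-open subsets form a boolean algebra, and this is precisely the semilattice of idempotents of $\mathsf{B}(\mathcal{G})$; being the ample semigroup of an \'etale boolean groupoid it is orthogonally complete, hence by Lemma~1.3 it is in fact complete. I would then check the cover-to-join hypothesis of Theorem~3.16: if $e = 1_{v}1_{v}^{-1}$ is a maximal idempotent of $P_G$ and $a_{1},\ldots,a_{n}$ are the (finitely many) edges with target $v$, then $\Phi(e) = Z(1_v,1_v)$ equals the set of units $(w,0,w)$ with $w \in vG^{\omega}$, and this decomposes as the disjoint union of the $Z(a_i, a_i)$, i.e.\ as the orthogonal join of the $\Phi(a_i a_i^{-1}) = \Phi(f)$ for $f \in \hat{e}$. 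Theorem~3.16 then yields a unique join-preserving homomorphism $\bar{\Phi} \colon CK_G \rightarrow \mathsf{B}(\mathcal{G})$ with $\bar{\Phi}\delta = \Phi$.

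It remains to prove $\bar{\Phi}$ is bijective. For surjectivity, I would use the fact that the sets $Z(x,y)$ form a basis of compact-open bisections for the topology on $\mathcal{G}$: any compact-open bisection is therefore a finite union of basic sets $Z(x_i,y_i)$, and by the intersection formula of Lemma~3.19 together with the fact that the source and range projections of a bisection are injective, this union can be refined to a finite orthogonal one, so it lies in the image of $\bar{\Phi}$. For injectivity, the concrete description in Theorem~3.13 realizes $CK_G$ as the inverse semigroup $O_G$ of partial bijections of $G^{\omega}$; reading off the source-range pair of a bisection recovers the corresponding element of $O_G$, so composing with $\bar{\Phi}$ yields the identification $CK_G \cong O_G$ already constructed, and in particular $\bar{\Phi}$ is injective.

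The main technical obstacle I foresee is the refinement step in the surjectivity argument: given a finite cover of a compact-open bisection by basic sets $Z(x_i,y_i)$, one must replace it by an orthogonal family without leaving the basis. This is handled by repeatedly applying Lemma~3.19 to split $Z(x,y)$ and $Z(u,v)$ into the overlap $Z(x,y) \cap Z(u,v)$ (which is again a single $Z(\cdot,\cdot)$) and its complement in each, where the complement is itself a finite disjoint union of basic sets because the covered idempotent $\bigcup_i y_i G^\omega$ is a compact-open subset of $G^\omega$ and the compact-open subsets form a boolean algebra generated by the $xG^\omega$. Once this combinatorial tidying is done the theorem follows.
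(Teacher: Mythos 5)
Your proposal follows the same overall strategy as the paper -- define $\theta(xy^{-1})=Z(x,y)$, check the Cuntz--Krieger relation $Z(1_v,1_v)=\bigcup_i Z(a_i,a_i)$, extend via the universal property (which is Theorem~3.17 in the paper's numbering, not 3.16), and deduce surjectivity from the fact that Lemma~3.19 forces any two basic bisections to be nested or disjoint -- but you handle the injectivity of $\bar{\theta}$ by a genuinely different route. The paper proves injectivity directly: it shows that $\bigcup_i Z(x_i,y_i)=\bigcup_j Z(u_j,v_j)$ forces $(x_1y_1^{-1},\ldots)\leftrightarrow(u_1v_1^{-1},\ldots)$, by taking $wz^{-1}\leq xy^{-1}$, running an infinite path through the corresponding groupoid element, and using the integer entry of the triple to force comparability with some $u_jv_j^{-1}$. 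You instead factor through Theorem~3.13: the source--range map $\mathsf{B}(\mathcal{G})\rightarrow I(G^{\omega})$ sends $Z(x,y)$ to the partial bijection $f_{\{xy^{-1}\}}$, and since every element of $CK_{G}$ is a finite orthogonal join of elements of $\delta(P_{G})$, the composite with $\bar{\theta}$ agrees with the bijection $F\colon CK_{G}\rightarrow O_{G}$ already constructed, whence $\bar{\theta}$ is injective. This is valid and arguably tidier, since it reuses work already done rather than repeating the infinite-path argument; the cost is that it makes Theorem~3.13 a logical prerequisite, whereas the paper's Section~3.7 argument is self-contained. Two small remarks: your worry about the orthogonal refinement in the surjectivity step is largely unnecessary, since by Lemma~3.19 two basic sets are either nested (discard the smaller) or disjoint, and disjointness inside a common bisection already yields orthogonality of the corresponding elements of $P_{G}$ via injectivity of the source and range projections, exactly as you note; and your explicit verification that $\mathsf{B}(\mathcal{G})$ is complete fills a step the paper leaves implicit.
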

\begin{proof}
Define $\theta \colon P_{G} \rightarrow \mathsf{B}(\mathcal{G})$ by $\theta (xy^{-1}) = Z(x,y)$
and map the zero to the emptyset.
Then by Lemma~\ref{le: bertha}, this map is a homomorphism.
We claim that it is injective.
Suppose that $Z(x,y) = Z(u,v)$
Since these sets are non-empty, we have that $xy^{-1}$ and $uv^{-1}$ are comparable since the poset $P_{G}$ is unambiguous.
It is now immediate by Lemma~\ref{le: bertha} that $xy^{-1} = uv^{-1}$.
Let $e$ be a vertex of $G$ and let $a_{1}, \ldots, a_{m}$ be the edges of $G$ with source $e$.
Then
$$Z(1_{e},1_{e}) = \bigcup_{i=1}^{m} Z(a_{i},a_{i}).$$
The conditions of Theorem~\label{the: two}  hold and so $\theta$ may be extended to a homomorphism 
$\bar{\theta} \colon CK_{G} \rightarrow \mathsf{B}(\mathcal{G})$.
Lemma~\ref{le: bertha} implies that each element of $\mathsf{B}(\mathcal{G})$ is a finite disjoint union of basis elements and so $\theta$ is surjective.

It remains to show that $\bar{\theta}$ is injective.
We shall prove that if
$$\bigcup_{i=1}^{m} Z(x_{i},y_{i}) = \bigcup_{j=1}^{n} Z(u_{j},v_{j})$$
then
$$\{x_{1}y_{1}^{-1}, \ldots, x_{m}y_{m}^{-1} \} \leftrightarrow \{u_{1}v_{1}^{-1}, \ldots, u_{n}v_{n}^{-1}\}.$$
By symmetry it is enough to prove that
if 
$$Z(x,y) \subseteq \bigcup_{j=1}^{n} Z(u_{j},v_{j})$$
then
$$xy^{-1} \rightarrow (u_{1}v_{1}^{-1}, \ldots, u_{n}v_{n}^{-1}).$$
Let $wz^{-1} \leq xy^{-1}$.
Then $w = xp$ and $z = yp$ for some finite path $p$.
Let $w'$ be any infinite path so that $xpw'$ and $ypw'$ are defined.
Then 
$$(xpw', \left| y \right| - \left| x \right|, ypw') \in Z(x,y)$$
and so belongs to $Z(u_{j},v_{j})$ for some $j$.
It is now easy to show that $wz^{-1}$ and $u_{j}v_{j}^{-1}$ are comparable
where we make essential use of the central number in the triple.
\end{proof}

\begin{remark} 
{\em We have had to place extra conditions on the graph in order that we can get a faithful
representation of the Cuntz-Krieger semigroup on the set of right-infinite strings.
The general case is discussed in \cite{LL}.}
\end{remark}


\end{document}